\newcommand{\R}{\mathbb{R}}
\renewcommand{\L}{\mathbb{L}}
\newcommand{\ep}{\varepsilon}
\newcommand{\dt}{\partial_t}
\newcommand{\grad}{\nabla} 
\newcommand{\produ}[1]{\left\langle #1\right\rangle}
\newcommand{\nab}{\bar\nabla}
\newcommand{\te}{\widetilde{\ep}}
\newcommand{\W}{\mathbb{H}^n}
\newtheorem{lemma}{Lemma}
\newtheorem{proposition}{Proposition}
\newtheorem{theorem}{Theorem}
\newtheorem{corollary}{Corollary}
\newtheorem{definition}{Definition}
\newtheorem{examp}{Example}
\newtheorem{remark}{Remark}
\title{Translating solitons from semi-Riemannian foliations}
\author{
\begin{tabular}{ccc}
Marie-Am\'elie Lawn &  & Miguel Ortega\\
\small Imperial College (UK) & & 
\small Universidad de Granada (Spain) \\
\small   m.lawn@imperial.ac.uk & &\small miortega@ugr.es
\end{tabular}
}
\date{\today}
\begin{document}
\maketitle

\begin{center} \textit{To the memory of Antonio.} \end{center}

\begin{abstract}
We recall the notion of (vertical) translating solitons in a product of a semi-Riemannian manifold $(M,g)$ and the real line. Mainly, we restrict our attention to those which are the graph of a smooth function. When dealing with submersions, we show a criteria to lift (or project) translating solitons from the base manifold to the total space (or viceversa). In particular, manifolds foliated by codimension 1 orbits of a Lie group action give rise to such solitons, up to solving a first-order ordinary differential equation. This gives us explicit criteria under which the graph of a function is a soliton, and we employ them to construct many examples of solitons, both new and old, in a unified way.
\end{abstract}

\noindent \textit{Keywords:} Translating soliton, submersions, Riemannian manifolds, semi-Riemannian manifolds, Lie group.

\noindent \textit{MSC[2010] Classification:} 53C44, 53C21, 53C42, 53C50.

\section{Introduction}

Given a smooth manifold $M$, assume a family of smooth immersions in a semi-Rieman\-nian manifold $(\mathbf{M},\mathbf{g})$, $F_t:M\rightarrow \mathbf{M}$, $t\in [0,\delta)$, $\delta>0$, with mean curvature vector $\vec{H}_t$. The initial immersion $F_0$ is called a solution to the \textit{mean curvature flow} (up to local diffeomorphism) if 
\begin{equation}\label{MCF} \left(\frac{d}{dt} F_t\right)^{\perp} = \vec{H}_t,
\end{equation}
where $\perp$ means the orthogonal projection on the normal bundle. In the Euclidean and Minkowski space, there is a famous family of such immersions, namely, translating solitons. A submanifold is called \textit{translating soliton} in the Euclidean Space when its mean curvature $\vec{H}$ satisfies the following equation:
\begin{equation} \label{soliton}
\vec{H} = v^{\perp},
\end{equation}
for some constant unit vector $v\in\R^{n+1}$. Indeed, if a submanifold $F:M\rightarrow \mathbb{R}^{n+1}$ satisfies this condition, then it is possible to define the forever flow $\Gamma:M\times[0,+\infty)\rightarrow \mathbb{R}^{n+1}$, $\Gamma(p,t)=F_t(p)=F(p)+tv$. Clearly, 
\[ \left(\frac{d}{dt} F_t\right)^{\perp} = v^{\perp}=\vec{H}.
\]
This justifies our definition. Such solutions have been widely studied in the case where the ambient space is the Euclidean (or the Minkowski) space.  Probably, the most famous examples are the Grim Reaper curve in $\R^2$, and the translating paraboloid and translating catenoid, \cite{CSS}. For a good list of other examples, see \cite{MSHS}. Also, in \cite{N} there are some examples with complicated topology. In  \cite{ALR}, the authors focus on the weak maximum principle applied to these objetcs. 

The starting point of this paper is the fact that the translating paraboloid and translating catenoid are rotationally symmetric, namely, invariant by the Lie group $SO(n)$ acting by isometries. We revise the concept of translating solitons when the ambient manifold is a semi-Riemannian product of a semi-Riemannian manifold and the real line. Needless to say, this includes the Riemannian setting.  Second, we are interested in  constructing  translating solitons having a nice behaviour under the action of Lie groups. We show a technique to construct such examples in different semi-Riemannian manifolds.  More precisely, assume that $(M,g)$ is a connected semi-Riemann manifold of dimension $n\geq 2$ and index $0\leq \alpha  \leq n-1$.  Given $\ep=\pm 1$, we construct the semi-Riemannian product $\bar{M}=M\times\R$ with metric $\produ{,}=g+\ep\mathrm{d}t^2$. The vector field $\dt\in\mathfrak{X}(\bar{M})$ is obviously Killing and unit, spacelike when $\ep=+1$ and timelike when $\ep=-1$. Now, let $F:\Gamma\rightarrow \bar{M}$ be a submanifold with mean curvature vector $\vec{H}$. Denote by $\dt^{\perp}$ the normal component of $\dt$ along $F$.  
\begin{definition} With the previous notation, we will call $F$  a {\normalfont (vertical) translating soliton of the mean curvature flow}, or simply, a {\normalfont translating soliton}, if $\vec{H}=\dt^{\perp}$.
\end{definition}
To justify this definition we remark that if $F(x)=(F_1(x),F_2(x))$ for any $x\in \Gamma$, we can define the map 
\[ 
F:\Gamma \times \R \rightarrow \bar{M}, \quad F(x,t)=(F_1(x),F_2(x)+t).
\]
Hence each $F(-,t)$ is a submanifold with associated mean curvature vector $\vec{H}_t$ and therefore, up to tangential diffeomorphisms, provides a forever solution to \eqref{MCF} invariant by the Killing vector field $\dt$.  This clearly generalizes the classical definition of translating solitons in $\R^{n+1}$. Notice that due to the rich group of isometries of $\R^{n+1}$, considering any constant unit vector field $v$ such that $\vec{H}=v^{\perp}$ is equivalent to considering $v=\partial_t$.

In this paper, we will focus on \textit{graphical} translating solitons. Namely, given $u\in C^2(M)$, we construct its graph map $F:M\rightarrow M\times\R$, $F(x)=(x,u(x))$. If $\nu$ is the upward normal vector along $F$ with $\ep'=\mathrm{sign}(\produ{\nu,\nu})=\pm 1$, we characterize in Proposition \ref{basicequation} the graphical translating solitons in $(\bar{M},\produ{,})$ as those satisfying the following PDE:
\begin{equation}
\label{fundamentalequation} \mathrm{div}\left(\frac{ \grad u } { \sqrt{\ep'\big(\ep+\vert \grad u\vert_g^2\big)}} \right)=
\frac{1}{ \sqrt{\ep'\big(\ep+\vert \grad u\vert_g^2\big)}}. 
\end{equation}
This equation is in general difficult to solve. The geometric way to reduce the number of variables is to consider on the manifold $M$ a foliation by orbits of a Lie group action. But we can often understand the actions by Lie groups as particular cases of submersions. Thus, we first consider the general situation where $M$ submerses to a base manifold $B$ and prove the following result:
\begin{center}
\parbox{0.88\textwidth}{
\textit{
Let $\pi:(M,g_M)\rightarrow (B,g_B)$ be a harmonic semi-Riemannian submersion. Given $u \in C^2(B)$, let $F:B\rightarrow B\times\R$, $F(x)=(x,u(x))$ be its graph map, and $\widetilde{F}:M\rightarrow M\times\R$ be the map $\widetilde{F}(x)=(x,\pi\circ u(x))$. Then $\widetilde{F}$ is a graphical translating soliton if and only if $F$ is a graphical  translating soliton.
}}
\end{center}
In fact, we obtain a more general result in Theorem \ref{bundle2}. Indeed, if we take into account the mean curvature of the fibers in the direction of the normal to the map $\tilde{F}$, we show that the existence of the translating soliton $\widetilde{F}$ in $M$ is equivalent to the existence of what we call an \textit{H-perturbed translating soliton $F$} in $B$. 

Moreover, an important particular case arise when $\pi:M\rightarrow B=I$, where  $B=I$ is an open interval with metric $\te ds^2$ ($\te =\pm 1$). Clearly,  the fibers of the submersion will be hypersurfaces, and we assume that each fiber $\pi^{-1}(s)$, $s\in I$, has constant mean curvature $h(s)$. Since any function $u$ defined on $M$ which is constant along the fibers, projects to another function $f:I\rightarrow\R$ such that $u=f\circ \pi$, we show in Theorem \ref{equiv_ODE} that equation \eqref{fundamentalequation} reduces to the following ODE, 
\begin{equation} \label{keyODE}
f''(s) = \big(\te +\ep f'(s)^2\big) \big( 1 - f'(s)h(s) \big). 
\end{equation}

Next,  we let a Lie group $\Sigma$ act on the manifold in a nice way. In \cite{Mos57}, it is shown that when $\Sigma$ is compact, and there is at least a good orbit with codimension 1, then, the space of orbits is either $\mathbb{S}^1$ or an interval. We will work in the general setting where $\Sigma$ acts by isometries and the projection to the space of orbits $\pi:M\rightarrow M/\Sigma$ is a well-defined smooth map, where $M/\Sigma$ is diffeomorphic to an open interval. We show that if the gradient of the projection is nowhere lightlike, then up to composing with another function, $\pi$ can be assumed to be a semi-Riemannian submersion with constant mean curvature fibers. We state here a more geometric and milder version of Theorem \ref{existence}, which is a consequence of Theorem \ref{equiv_ODE}.
\begin{center}
\parbox{0.88\textwidth}{
\textit{Let $(M,g)$ be a connected semi-Riemannian manifold. Let $\Sigma$ be a Lie group acting by isometries on $M$ and $\pi:(M,g_M)\rightarrow (I,\widetilde{\varepsilon}ds^2)$ be a semi-Riemannian submersion such that the fibers of $\pi$ are orbits of the action, with function $h:I\to\R$ representing the mean curvature of the fibers. Given $u\in C^2(M)$, consider its graph map 
$F:M\rightarrow M\times\R$, $F(x)=(x,u(x))$ for any $x\in M$. Then, $F$ is a $\Sigma$-invariant translating soliton if, and only if, there exists a solution $f\in C^2(I,\R)$ to \eqref{keyODE} such that $u=f\circ\pi$. 
}}
\end{center}
This leads to the existence of translating solitons which are the union of two graphical ones, as stated in Corollary \ref{oide}. We recall that the \textit{translating catenoid} in $\R^3$  is obtained in this way in \cite{CSS}, and it is one of the main classical examples. Thus, we will be able to obtain translating solitons with two ends in some manifolds. Second, we show in Corollary \ref{globalexamples} that, under certain assumptions on the signature and for certain values of the initial value problems, solutions exist on the whole manifold $M$. 

Last Section is devoted to obtaining specific examples. All of them share a common technique: We start with a manifold $M$ and a Lie group of isometries $\Sigma$ such that the space of orbits $M/\Sigma$ is an open interval. The associated ODE \eqref{keyODE} deeply relies on the auxiliary function $h:M/\Sigma\rightarrow\R$, which depends on the case. When the ODE becomes a problem with a singularity, we show the existence of local solutions 
by using dynamical systems, and we lift them up to the original manifold $M$ to obtain translating solitons. 
Inspired by the Grim Reaper Cylinder in $\R^n$, we construct many others by using product manifolds. Needless to say, we recover the already known rotationally symmetric translating solitons in the Euclidean and Minkowski Spaces, but we obtain among others a new \textit{translating catenoid} in Minkowski Space,   rotationally symmetric translating solitons in the De Sitter Space and the Hyperbolic Space. By a gluing technique from \cite{BCO}, we construct a $C^{\infty}$, boost invariant, translating soliton in Minkowski Space by gluing (up to) 4 pieces. The goal is not to provide an exhaustive list, but rather to illustrate our technique, and make it useful for future works.

\section{Setup}
The following proposition is well-known in the Euclidean setting (see for example \cite{LTW}). The proof is similar in our context, but we include it for completeness and discuss the necessary modifications.
\begin{proposition}\label{basicequation}
Let $(M,g)$ be a semi-Riemannian manifold, $u:M\rightarrow \R$ a\, $\mathcal{C}^2$ function, and let $F:M\rightarrow M\times\R=:\bar{M}$, $F(x)=(x,u(x))$ be  its graph map. Given $\ep=\pm 1$, assume that $F:(M,\gamma=F^*\produ{,})\rightarrow (\bar{M},\produ{,}=g+\ep dt^2)$ is a semi-Riemannian hypersurface with unit upward normal $\nu$ such that $\produ{\nu,\nu}=\ep'=\pm 1$. Then, $F$ is a (vertical) translating soliton  if, and only if, function $u$ satisfies \eqref{fundamentalequation}.
\end{proposition}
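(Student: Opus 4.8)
The plan is to reduce the soliton condition $\vec{H}=\dt^{\perp}$ to an identity between two normal vector fields, both proportional to the upward unit normal $\nu$, and then to read off the scalar equation. First I would determine $\nu$ explicitly. Writing the tangent vectors to the graph as $F_*(X)=X+X(u)\,\dt$ for $X\in\mathfrak{X}(M)$, the conditions $\produ{\nu,F_*X}=0$ for all $X$, together with the splitting $\produ{,}=g+\ep\,dt^2$, force $\nu$ to be proportional to $\dt-\ep\,\grad u$. Imposing the normalization $\produ{\nu,\nu}=\ep'$ and the upward sign of the $\dt$-component then gives
\[
\nu=\frac{\dt-\ep\,\grad u}{\sqrt{\ep'\big(\ep+\vert\grad u\vert_g^2\big)}}=:\frac{\dt-\ep\,\grad u}{W}.
\]
Using that the normal projection of any vector $Z$ is $Z^{\perp}=\ep'\produ{Z,\nu}\,\nu$ and that $\produ{\dt,\grad u}=0$, $\produ{\dt,\dt}=\ep$, I would compute $\dt^{\perp}=\ep'\produ{\dt,\nu}\,\nu=\dfrac{\ep\ep'}{W}\,\nu$.

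For the left-hand side I would express $\vec{H}$ through the ambient divergence of the unit normal. With the conventions $\nab_X\nu=-SX$ and $\vec{H}=\sum_i\varepsilon_i(\nab_{e_i}e_i)^{\perp}$ over an adapted orthonormal frame ($\produ{e_i,e_i}=\varepsilon_i$), one has $\vec{H}=\ep'\,(\mathrm{tr}\,S)\,\nu$ and $\mathrm{tr}\,S=-\overline{\mathrm{div}}\,\nu$, where $\overline{\mathrm{div}}$ is the divergence of $\bar M$. Extending $\nu$ off the graph by declaring $W$ and $\grad u$ to be independent of $t$, the key point is that the product structure decouples the two summands of $\nu$: since $\dt$ is parallel and $1/W$ does not depend on $t$, the term $\overline{\mathrm{div}}(\dt/W)$ vanishes, while the horizontal term reduces to the intrinsic divergence on $M$, giving $\overline{\mathrm{div}}\,\nu=-\ep\,\mathrm{div}\big(\grad u/W\big)$. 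Collecting signs yields
\[
\vec{H}=\ep'\,(\mathrm{tr}\,S)\,\nu=\ep\ep'\,\mathrm{div}\!\left(\frac{\grad u}{W}\right)\nu.
\]

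Finally I would equate the two expressions. The soliton equation $\vec{H}=\dt^{\perp}$ becomes $\ep\ep'\,\mathrm{div}(\grad u/W)\,\nu=\dfrac{\ep\ep'}{W}\,\nu$, and cancelling the common nonzero factor $\ep\ep'$ and the unit normal $\nu$ leaves exactly \eqref{fundamentalequation}; conversely, any $u$ solving \eqref{fundamentalequation} makes the two normal fields coincide, which is the soliton condition. The routine parts are the coordinate verification that $\nu$ is normal and unit, and the splitting computation for $\overline{\mathrm{div}}\,\nu$. The step needing the most care—the main obstacle—is the bookkeeping of the two causal signs: the factor $\ep$ coming from $\produ{\dt,\dt}$ and the factor $\ep'$ coming from the causal character of $\nu$ enter both $\dt^{\perp}$ and $\vec{H}$ (through the projection formula $Z^{\perp}=\ep'\produ{Z,\nu}\nu$, the identity $\vec{H}=\ep'(\mathrm{tr}\,S)\nu$, and the sign of $\overline{\mathrm{div}}\,\nu$), and one must check that they combine to the same $\ep\ep'$ on both sides so as to cancel; this is precisely the modification relative to the Euclidean argument of \cite{LTW}.
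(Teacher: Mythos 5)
Your proposal is correct, and it reaches the conclusion by a genuinely different route than the paper. Both arguments use the same unit normal $\nu=\frac{1}{W}\big(\dt-\ep\,\grad u\big)$ and the same projection computation $\dt^{\perp}=\frac{\ep\ep'}{W}\nu$, so the entire difference lies in how $\produ{\nu,\vec{H}}$ is evaluated. The paper does it by direct frame computation: it takes a $g$-orthonormal frame $(e_i)$, writes the induced metric $\gamma_{ij}=\ep_i\delta_{ij}+\ep u_iu_j$ and its inverse $\gamma^{ij}$, computes the trace $\sum_{i,j}\gamma^{ij}\produ{\nu,\nab_{dF(e_i)}dF(e_j)}$, and only at the end regroups the resulting double sum of Hessian terms with $\produ{\grad(1/W),\grad u}$ to recognize the divergence $\ep\,\mathrm{div}(\grad u/W)$. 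You instead extend $\nu$ to a $t$-independent field on $M\times\R$ and invoke the identity $\mathrm{tr}\,S=-\overline{\mathrm{div}}\,\nu$, after which the product structure splits the ambient divergence: $\overline{\mathrm{div}}(\dt/W)=0$ because $\dt$ is parallel and $W$ is $t$-independent, while the horizontal part reduces to $-\ep\,\mathrm{div}(\grad u/W)$. Your route avoids the inverse-metric and double-sum bookkeeping entirely, and the divergence structure of \eqref{fundamentalequation} emerges automatically rather than by reassembly; the one point you should make explicit is that your extension has constant causal character, $\produ{\nu,\nu}\equiv\ep'$ off the graph, so that $\produ{\nab_{\nu}\nu,\nu}=0$ and the ambient divergence along the graph really equals $-\mathrm{tr}\,S$ with no normal-derivative correction (this is what legitimizes the key identity, and it holds for your $t$-independent extension). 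What the paper's heavier computation buys is self-containedness: it never needs to extend $\nu$ or to know the divergence identity for unit normal fields. As for the sign bookkeeping you flag as the main risk, it comes out consistently in both treatments: each side of the soliton equation carries the common nonzero factor $\ep\ep'$, which cancels to leave exactly \eqref{fundamentalequation}.
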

\begin{proof}
Note that, under the usual identifications, for each $X\in TM$, we have
\[ dF(X) = (X,du(X))  = (X,0)+du(X)\dt = (X,g(\grad  u,X)) = 
(X,0) + g(\grad  u,X)\dt,
\]
where $\grad  u$ is the $g$-gradient of $u$.  We consider the metric $\gamma=F^*\produ{,}$ on $M$, but we have to assume that $(M,\gamma)$ is a semi-Riemannian submanifold. In particular, the following function is constant, $\ep'=\mathrm{sign}\big(\ep+\vert\grad u\vert_g^2\big)=\pm 1$. 
Therefore,  the upward normal vector field is
\begin{equation}\label{normal} \nu = \frac{1}{W}\big(-\ep(\grad u,0)+\dt\big), \quad W=+\sqrt{\ep'\big(\ep+\vert\grad u\vert_g^2\big)}.
\end{equation}
Needless to say, $\produ{\nu,\nu}=\ep'$. Then, we consider a local $g$-orthonormal frame $B=(e_1,\ldots,e_n)$ such that $g(e_i,e_j)=\ep_i\delta_{ij}$, for any $i,j=1,\ldots,n$. We denote $u_i=du(e_i)$, $i=1,\ldots,n$. For this frame, we compute the induced metric $\gamma=F^{*}\produ{,}$, so that the coefficients of the Gram matrix are 
\[ \gamma_{ij}=\gamma(e_i,e_j) = \ep_i\delta_{ij}+\ep u_iu_j. 
\]
Then, the inverse matrix is     
\[ \gamma^{ij} = \ep_i\delta_{ij} -\frac{\ep'}{W^2}\ep_i\ep_ju_iu_j.
\]
Now, if $II$ is the second fundamental form of $F$, then the mean curvature vector of $F$ is 
\[ \vec{H} = \mathrm{tr}_{\gamma}(II) = \ep' \produ{\vec{H},\nu} \nu 
=\ep' \produ{\dt^{\perp},\nu}\nu =  \ep' \produ{\dt,\nu}\nu 
=\frac{\ep\ep'}{W}\nu. 
\] 
But in out setting, what really matters is $ \produ{\nu,\vec{H}} =\frac{\ep}{W}.$ On the other hand, let  $\nab$ the Levi-Civita connection of $(\bar{M},\produ{,})$. We recall O'Neill's book \cite{ON}, and its equations for the Levi-Civita connection of a (warped) product. Thus,
\begin{align*}
&\produ{\nu,\vec{H}}  =\produ{\nu,\mathrm{tr}_\gamma(II)} 
=\sum_{i,j}\gamma^{ij}\produ{\nu,\nab_{dF(e_i)}dF(e_j)} 
 = \sum_{i,j}\gamma^{ij} \frac{\ep}{W} du_j(e_i) \\
& = \frac{\ep}{W}\mathrm{div}(\grad u) -\ep\ep'\sum_{i,j}\frac{\ep_i\ep_ju_iu_j}{W^3}e_i(e_j(u)).
\end{align*}
Now, we compute
\[ \produ{\grad\Big(\frac{1}{W}\Big),\grad u } 
=\frac{-1}{W^2}\sum_i \ep_iu_ie_i\Big( \sqrt{\ep'\big(\ep+\vert\grad u\vert_g^2\big)} \Big) 
= \frac{-\ep'}{W^3} \sum_{i,j} \ep_i\ep_ju_iu_je_i(e_j(u)).
\]
All together, 
\[\produ{\nu,\vec{H}}  =\frac{\ep}{W}\mathrm{div}(\grad u) +\ep \produ{\grad\Big(\frac{1}{W}\Big),\grad u } =
\ep \mathrm{div}\Big( \frac{\grad u}{W}\Big).
\]
\end{proof}
\begin{remark}\normalfont 
If we call $H=\mathrm{tr}(A)/n$ the mean curvature function of the hypersurface, $A$ being the shape operator, then $\produ{\nu,\vec{H}}=n H \ep'$. In particular, 
\[
\mathrm{div}\left(\frac{ \grad u } { \sqrt{\ep'\big(\ep+\vert \grad u\vert_g^2\big)}} \right)=
\frac{1}{ \sqrt{\ep'\big(\ep+\vert \grad u\vert_g^2\big)}} = \ep' n H. 
\]
\end{remark}
\begin{corollary} Let $(M,g)$ be a compact, without boundary, orientable Riemannian manifold. Then, $M$ does not admit any globally defined graphical Translating Soliton $F:M\rightarrow (M\times\mathbb{R},g+\varepsilon dt^2)$, for $\varepsilon=\pm 1$.
\end{corollary}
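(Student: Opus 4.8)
The plan is to reduce the soliton condition to the scalar PDE \eqref{fundamentalequation} and then integrate it over $M$, exploiting that its right-hand side is sign-definite. Suppose, for contradiction, that such a graphical translating soliton $F(x)=(x,u(x))$ exists, with $u\in C^2(M)$. Since $(M,g)$ is Riemannian, $\varepsilon+|\nabla u|_g^2\geq\varepsilon$; when $\varepsilon=+1$ this quantity is automatically positive, and in general the hypothesis that $F$ be a (non-degenerate) semi-Riemannian hypersurface forces $\varepsilon+|\nabla u|_g^2$ to be nowhere zero, so by connectedness of $M$ the sign $\varepsilon'=\mathrm{sign}(\varepsilon+|\nabla u|_g^2)$ is constant and
\[
W=\sqrt{\varepsilon'\big(\varepsilon+|\nabla u|_g^2\big)}
\]
is a well-defined, strictly positive, $C^1$ function on all of $M$. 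By Proposition \ref{basicequation}, $u$ then satisfies
\[
\mathrm{div}\!\left(\frac{\nabla u}{W}\right)=\frac{1}{W}.
\]

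Next I would integrate this identity against the Riemannian volume element $dV$ of $(M,g)$. The vector field $X:=\nabla u/W$ is globally defined and of class $C^1$, so on the compact, orientable manifold $M$ without boundary the divergence theorem yields
\[
\int_M \mathrm{div}(X)\,dV=0.
\]
On the other hand, because $1/W>0$ everywhere and $M$ has positive volume,
\[
\int_M \frac{1}{W}\,dV>0.
\]
Since the two sides of the PDE agree pointwise, so do their integrals, giving $0=\int_M W^{-1}\,dV>0$, a contradiction. Hence no globally defined graphical translating soliton can exist on $M$.

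There is essentially no analytic obstacle here: the whole argument hinges on the single observation that the right-hand side of \eqref{fundamentalequation}, being $1/W$ with $W>0$, has constant positive sign, so its integral cannot vanish, whereas the integral of a divergence over a closed manifold must. The only points deserving attention are the routine verifications that $W$ is globally smooth and bounded away from zero --- guaranteed by the assumption that $F$ is a genuine non-degenerate hypersurface on the connected manifold $M$ --- and that compactness, orientability and absence of boundary are precisely the hypotheses under which $\int_M\mathrm{div}(X)\,dV$ vanishes. The Riemannian assumption enters through the positivity of the volume measure $dV$, which is what lets us conclude $\int_M W^{-1}\,dV>0$ and thereby close the contradiction.
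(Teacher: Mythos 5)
Your proposal is correct and follows essentially the same route as the paper's own proof: integrate equation \eqref{fundamentalequation} over the closed orientable manifold, observe that the left-hand side vanishes by the divergence theorem while the right-hand side $\int_M W^{-1}\,d\mu_g$ is strictly positive, and conclude by contradiction. Your extra care in justifying that $W$ is globally defined, $C^1$, and positive (via non-degeneracy of the hypersurface) is a sound elaboration of a point the paper leaves implicit, but it does not change the argument.
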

\begin{proof} Assume that there exists a globally defined graphical Translating Soliton on $M$. Then, for some function $u\in C^2(M)$,  equation \eqref{fundamentalequation} holds true. By using the volume form $d\mu_g$, we obtain
\[ 0 = \int_M \mathrm{div}\left(\frac{\nabla u}{\sqrt{\varepsilon'(1+\vert \nabla u\vert^2)}}\right) d\mu_g = 
\int_M \frac{1}{\sqrt{\varepsilon'(1+\vert \nabla u\vert^2)}} d\mu_g >0.
\]
This is a contradiction.
\end{proof}

\section{Submersions}
We first recall shortly some facts about submersions.  For more details we refer to {\cite{Escobales} and the classical O'Neill's book \cite{ON}. If $\pi:(M,g_M)\rightarrow(B,g_B)$ is a semi-Riemannian submersion, we can consider the following decomposition of the tangent bundle $TM$  $$TM=\mathcal{H}\oplus\mathcal{V},$$
where $\mathcal{V}:=\ker{d\pi}$ is the vertical distribution consisting of vectors tangent to the fibers and $\mathcal{H}$ is the horizontal complement with respect to $g_M$. Note that $\mathcal{V}$ is integrable, but it is usually not true for the horizontal distribution. 
Every vector field $X$ in $TM$ can then be uniquely
written as $X=\mathcal{H}X+\mathcal{V}X$ where
$\mathcal{H}X$ (resp. $\mathcal{V}X$) is the horizontal (resp.
vertical) component. For completeness and clearness sake, we add the following 
\begin{definition}
A vector field $X\in\Gamma(TM)$ is
called horizontal, if  for all $x\in M$,
$X_{x}\in\mathcal{H}_{x}$ and vertical,
if  for all $x\in M$,
$X_{x}\in\mathcal{V}_{x}$. It is called projectable, if there exists a vector field $\check{X}\in\Gamma(TB)$ such
 that for all $x\in M$,
 $d\pi(X_{x})=\check{X}_{\pi(x)}$, $X$ and
 $\check{X}$ are called $\pi$-related. It is called basic, if it is projectable and horizontal.
\end{definition}
We add the following useful 
\begin{remark}\label{remark_submersions} \normalfont 
\begin{itemize}
\item[1)] Obviously, from the definition of $\mathcal{V}$, a
vector field $X\in\Gamma(TM)$ is vertical, if and only if
it is $\pi$-related to the null section of $TB$.
\item[2)] For every vector field $\check{X}$ in $TB$ there exists a unique horizontal vector field in $\Gamma(TM)$ which is $\pi$-related to $\check{X}$. It is called the horizontal lift of $\check{X}$. We will denote it by $X^h$. Moreover we will denote by $\mathcal{H}(X)$ the horizontal component of a vector field $X$ of $TM$.
\item[3)] If $X$, $Y$ are basic vector fields on $M$ $\pi$-related to $\check{X}$, $\check{Y}$, then
$g_M(X,Y)=\pi^*g_B(\check{X},\check{Y})$. Moreover $\mathcal{H}\nabla^M_XY$ is the basic vector field corresponding to $\pi^*\nabla^B_{\check{X}}\check{Y}$, where $\nabla^M$ (respectively $\nabla^B$) are the Levi-Civita connection on $M$ (respectively $B$).
\end{itemize}
\end{remark}
Finally let $W$, $Z$ be vector fields in $TM$. We recall the definition of the two O'Neill fundamental tensors.
\begin{eqnarray}
T_WZ&:=&\mathcal{H}\nabla^M_{\mathcal{V}W}(\mathcal{V}Z)+\mathcal{V}\nabla^M_{\mathcal{V}W}(\mathcal{H}Z),\nonumber\\
A_WZ&:=&\mathcal{V}\nabla^M_{\mathcal{H}W}(\mathcal{H}Z)+\mathcal{H}\nabla^M_{\mathcal{H}W}(\mathcal{V}Z)
\end{eqnarray}
Note that if $W$ and $Z$ are vertical, $T$ is the second fundamental form of the fibers. Hence  the fibers of the submersion are totally geodesic if and only if $T\equiv 0$. We recall that the submersion is harmonic, if and only if the mean curvature of the fiber $h=0$. Further, let $X$, $Y$ be horizontal vectors, and $U$, $V$ be vertical vectors we recall the following useful formulas
\begin{eqnarray}\label{connection_ONtensors}
\nabla^M_UV&=&T_UV+\hat{\nabla}_UV, \quad \nabla^M_UX=\mathcal{H}\nabla^M_VX+T_UX\\
\nabla^M_XU&=&A_XU+\mathcal{V}\nabla^M_XU,\quad \nabla^M_XY=\mathcal{H}\nabla^M_XY+A_XY
\end{eqnarray}
where $\hat{\nabla}$ is the connection on the fibers.

\begin{theorem}\label{bundle2}
Let $\pi:(M,g_M)\rightarrow (B,g_B)$ be a semi-Riemannian submersion 
with constant mean curvature fibers.
Given $u \in C^2(B)$, let $F:B\rightarrow B\times\R$, $F(x)=(x,u(x))$ be its graph map, and $\widetilde{F}:M\rightarrow M\times\R$ be the map $\widetilde{F}(x)=(x,u\circ \pi(x) )$. Let $\nu_F$, $\nu_{\widetilde{F}}$ be the unit upward vector along $F$ and $\widetilde{F}$, respectively, and $H_{\mathrm{fib}}^{\nu_{\widetilde{F}}}$ be the mean curvature of the fibers of $\pi\times 1:M\times\R\rightarrow B\times\R$ in the direction of $\nu_{\widetilde{F}}$. 
Then $\widetilde{F}$ is a graphical translating soliton if and only if $F$ satisfies the equation $\vec{H}_{F}=\partial_t^{\perp}+H_{\mathrm{fib}}^{\nu_{\widetilde{F}}}\nu_F$.
\end{theorem}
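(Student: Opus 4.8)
The plan is to realise $F$ and $\ti{F}$ as the base and total space of a single auxiliary submersion and then split the mean curvature of $\ti{F}$ along it. Consider $\Pi:=\pi\times 1\colon M\times\R\to B\times\R$, which is again a semi-Riemannian submersion; its vertical distribution is $\mathcal{V}^{\Pi}=\ker(d\pi)\times\{0\}$, and since $\dt$ is $\produ{,}$-orthogonal to $TM$, the field $\dt$ is $\Pi$-horizontal with $d\Pi(\dt)=\dt$. The key observation is that, as $u\circ\pi$ is constant along the fibres of $\pi$, for every vertical $V$ one has $d\ti{F}(V)=(V,du(d\pi\,V))=(V,0)$; hence $\mathcal{V}^{\Pi}\subset d\ti{F}(TM)$, i.e. the fibre of $\Pi$ through each point of $\ti{F}(M)$ lies entirely inside $\ti{F}(M)$. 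Consequently $\nu_{\ti{F}}\perp\mathcal{V}^{\Pi}$, so $\nu_{\ti{F}}$ is $\Pi$-horizontal; since $\Pi$ maps $\ti{F}(M)$ onto $F(B)$ and is an isometry on horizontal vectors, one checks $d\Pi(\nu_{\ti{F}})=\nu_{F}$ and $\produ{\nu_{\ti{F}},\nu_{\ti{F}}}=\produ{\nu_{F},\nu_{F}}=\ep'$. The same isometry property gives $\produ{\dt,\nu_{\ti{F}}}=\produ{\dt,\nu_{F}}$, because both $\dt$ and $\nu_{\ti{F}}$ are horizontal.

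Next I would compute $\produ{\vec{H}_{\ti{F}},\nu_{\ti{F}}}$ by tracing the second fundamental form along the $\produ{,}$-orthogonal splitting $d\ti{F}(TM)=\mathcal{V}^{\Pi}\oplus\mathcal{H}_{\ti{F}}$, where $\mathcal{H}_{\ti{F}}:=d\ti{F}(TM)\cap\mathcal{H}^{\Pi}$ is carried isomorphically onto $TF(B)$ by $d\Pi$ (a dimension count). Choosing a frame $\{V_a\}\cup\{X_i\}$ adapted to this splitting, with the $X_i$ being the horizontal lifts of a $\gamma_F$-orthonormal frame $\{\check{X}_i\}$ of $TF(B)$, the trace breaks into a vertical and a horizontal block. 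Since $\nu_{\ti{F}}$ is normal to the fibre, the vertical block is $\sum_a\ep_a\produ{\nab_{V_a}V_a,\nu_{\ti{F}}}=\produ{\vec{H}_{\mathrm{fib}},\nu_{\ti{F}}}$, the mean curvature vector of the $\Pi$-fibre paired with $\nu_{\ti{F}}$. For the horizontal block, Remark~\ref{remark_submersions}(3) applied to $\Pi$ shows $\mathcal{H}\nab_{X_i}X_i$ projects to $\nabla^{B\times\R}_{\check{X}_i}\check{X}_i$, while the $A$-tensor part of $\nab_{X_i}X_i$ is vertical and hence orthogonal to $\nu_{\ti{F}}$; using $d\Pi(\nu_{\ti{F}})=\nu_{F}$ and the horizontal isometry, the horizontal block equals $\sum_i\ep_i\langle\nabla^{B\times\R}_{\check{X}_i}\check{X}_i,\nu_{F}\rangle=\produ{\vec{H}_{F},\nu_{F}}$. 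This yields the structural identity
\begin{equation*}
\produ{\vec{H}_{\ti{F}},\nu_{\ti{F}}}=\produ{\vec{H}_{F},\nu_{F}}+\produ{\vec{H}_{\mathrm{fib}},\nu_{\ti{F}}}.
\end{equation*}

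Finally, since $\ti{F}$ and $F$ are hypersurfaces, the vector soliton equation $\vec{H}_{\ti{F}}=\dt^{\perp}$ is equivalent to its $\nu_{\ti{F}}$-projection $\produ{\vec{H}_{\ti{F}},\nu_{\ti{F}}}=\produ{\dt,\nu_{\ti{F}}}$, and likewise $\vec{H}_{F}=\dt^{\perp}+H^{\nu_{\ti{F}}}_{\mathrm{fib}}\nu_{F}$ is equivalent to $\produ{\vec{H}_{F},\nu_{F}}=\produ{\dt,\nu_{F}}+\ep'\,H^{\nu_{\ti{F}}}_{\mathrm{fib}}$. Substituting the structural identity together with $\produ{\dt,\nu_{\ti{F}}}=\produ{\dt,\nu_{F}}$, the two conditions match precisely when $\produ{\vec{H}_{\mathrm{fib}},\nu_{\ti{F}}}$ is identified with $-\ep'\,H^{\nu_{\ti{F}}}_{\mathrm{fib}}$, which is the content of the sign and normalisation convention fixing the fibre mean curvature \emph{in the direction} $\nu_{\ti{F}}$. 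I expect the two genuinely delicate points to be, first, this bookkeeping of signs and index factors in the semi-Riemannian trace, and second, the verification that the fibre term actually descends to a function on $B$: this is exactly where the constant-mean-curvature hypothesis on the fibres is used, since it guarantees that $\produ{\vec{H}_{\mathrm{fib}},\nu_{\ti{F}}}$ is constant along each fibre and hence defines the function $H^{\nu_{\ti{F}}}_{\mathrm{fib}}$ on $F(B)$ that appears in the statement.
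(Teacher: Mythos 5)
Your proposal is correct and follows essentially the same route as the paper's own proof: both realise the two graphs through the product submersion $\pi\times 1$, observe that $\nu_{\widetilde{F}}$ is horizontal and is the horizontal lift of $\nu_F$, and then split the trace of the second fundamental form of $\widetilde{F}$ into a vertical block (identified, via the $T$-tensor, with the fibre mean curvature in the direction $\nu_{\widetilde{F}}$) and a horizontal block (identified, via O'Neill's formulas and Remark \ref{remark_submersions}, with $\vec{H}_F$), with the constant-mean-curvature hypothesis invoked exactly where the paper invokes it, namely to make the fibre term descend to $B$. If anything, your execution is slightly tighter: your frame $\{V_a\}\cup\{X_i\}$ is genuinely a $\produ{,}$-orthonormal frame of $T\widetilde{F}(M)$, whereas the paper traces over a projectable frame of $TM$ (whose horizontal part is not tangent to the graph), and your passage to the scalar identity $\produ{\vec{H}_{\widetilde{F}},\nu_{\widetilde{F}}}=\produ{\vec{H}_F,\nu_F}+\produ{\vec{H}_{\mathrm{fib}},\nu_{\widetilde{F}}}$ is legitimate since both immersions are hypersurfaces, so nothing is lost relative to the paper's vector identity $\vec{H}_{\widetilde{F}}=\vec{H}_F^h+H_{\mathrm{fib}}^{\nu_{\widetilde{F}}}\nu_{\widetilde{F}}$.
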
 
\begin{proof} Consider the submersion $\pi:(M,g_M)\rightarrow (B,g_B)$.
 $\widetilde{F}(x)=(x,v(x))$ for any $x\in M$, where $v:M\rightarrow \R$ is the height function. Similarly, $F(p)=(p,u(p))$ for $p\in B$, $u:B\rightarrow\R$, and by assumption $u\circ\pi = v$. 

Now locally for every $x_0\in M$, we take an open neighborhood $U$ of $x_0$ and a collection
of projectable vector fields $\{e_i\}_{i=1}^n$ such that $\{e_i(x)\}_{i=1}^n$ is a basis for $T_xM$,
for all $x$ in $U$. Note that we can assume that $e_1,\ldots,e_k$ are vertical
and $e_{k+1},\ldots,e_{n}$ are horizontal w.r.t. $\pi$. Then by Remark \ref{remark_submersions}
\begin{eqnarray*}
g_M((\grad^Bu)^{h},e_i) =\pi^*g_B(\grad^Bu,d\pi(e_i)) = \pi^*du(d\pi(e_i)) = dv(X_i)=g_M(\grad^Mv,e_i),
\end{eqnarray*}
Hence $(\grad^Bu)^h=\grad^Mv$ and, since the submersion is semi-Riemannian, $\vert\grad^M v\vert ^2 = \vert \grad^B u\vert^2$. 

Next, $\pi\times 1:\bar{M}=(M\times \R,g_M+\ep dt^2)\rightarrow \bar{B}=(B\times \R,g_B+\ep dt^2)$ is another semi-Riemannian submersion, satisfying that  the horizontal lift of $\dt$ is $\dt$. Since $\vert\grad^M v\vert ^2 = \vert \grad^B u\vert^2$, we have $W^2=\ep'(\ep+\vert\grad^M v\vert ^2) =\ep'(\ep+ \vert \grad^B u\vert^2)$. 
 Notice that obviously by construction $\nu_{_{\widetilde{F}}}$ is horizontal.
Moreover for any $(p,t)\in B\times \R$, $(\pi\times 1)^{-1}(p,t)=\pi^{-1}(p)\times\{t\}$.
By equation \eqref{normal}, the horizontal lift w.r.t. $\pi\times 1$ of the normal $\nu_{F}$ in $B\times\R$ is exactly the normal $\nu_{\widetilde{F}}$ in $M\times \R$. 

By extending the basis $\{e_i\}_{i=1}^n$ of $TM$, we can now construct the local orthonormal frame \newline $(e_1,\ldots,e_n,e_{n+1}=\dt)$ of $T\bar{M}$, where $\ep_i=g_{\bar{M}}(e_i,e_i)=\pm 1$, depending on the signature of $M$. Note that $\{v_i:=d(\pi\times 1)e_i \vert k+1\leq i\leq n+1\} $ is a local orthonormal frame of $T\bar{B}$ with same signs. Now for the vertical vector fields $e_i$, $1\leq i\leq k$, and the horizontal vector field $\nu_{\widetilde{F}}$ we get using formulas \eqref{connection_ONtensors}
\[ g_{\bar{M}} \big( \nabla_{e_i}^{\bar{M}} \nu_{{\widetilde{F}}},e_i\big) = 
g_{\bar{M}}\big( \mathcal{H}\nabla_{e_i}^{\bar{M}} \nu_{\widetilde{F}},e_i\big)+g_{\bar{M}}\big( T_{e_i}\nu_{\widetilde{F}},e_i\big),
\]
Hence $\nabla_{e_i}^{\bar{M}} \nu_{\widetilde{F}}$ is horizontal if and only if the second term of the right hand side vanishes. Since the vector fields $e_i$ are vertical, they are by definition tangent to the leaves of the submersion, and the horizontal vector field $\nu_{\widetilde{F}}$ is normal to the leaves. Hence $\sum_i^kg_{\bar{M}}\big( T_{e_i}\nu_{\widetilde{F}},e_i\big)$ is exactly the mean curvature $H_{\mathrm{fib}}^{\nu_{\widetilde{F}}}$ of the fibers in the direction of $\nu_{\widetilde{F}}$. 

Denoting hence respectively by $II_{\widetilde{F}}$ and $II_F$ the second fundamental forms of $\widetilde{F}$ and $F$ with associated mean curvature vectors $\vec{H}_{\widetilde{F}}$ and $\vec{H}_F$ we use Remark \ref{remark_submersions} to compute
\begin{align*}
& \vec{H}_{\widetilde{F}} = \sum_i^n \varepsilon_i II_{\widetilde{F}}(e_i,e_i)=
\sum_i ^n\ep_i g_{\bar{M}}(\nabla^{\bar{M}}_{e_i}\nu_{\widetilde{F}},e_i)\nu_{\widetilde{F}}= 
\sum_{i\geq k+1}^n \ep_i g_{\bar{M}}(\nabla^{\bar{M}}_{e_i}\nu_{\widetilde{F}},e_i)\nu_{\widetilde{F}} +H_{\mathrm{fib}}^{\nu_{\widetilde{F}}} \nu_{\widetilde{F}} \\& = 
\sum_{i\geq k+1} \ep_i \pi^*g_{\bar{B}}(\pi^*\nabla^{\bar{B}}_{v_i}\nu_{F},v_i)\nu_{F}^h+H_{\mathrm{fib}}^{\nu_{\widetilde{F}}} \nu_{\widetilde{F}} 
=\Big( \sum_{i\geq k+1} \ep_i g_{\bar{B}}(\nabla^{\bar{B}}_{v_i}\nu_{F},v_i)\nu_{F}\Big)^{h}+H_{\mathrm{fib}}^{\nu_{\widetilde{F}}} \nu_{\widetilde{F}}\\& = \vec{H}_{F}^{h}+H_{\mathrm{fib}}^{\nu_{\widetilde{F}}} \nu_{\widetilde{F}} .
\end{align*}
Projecting and using the translating soliton equation then directly shows our result. Note that the projection of the mean curvature of the fibers to the base only makes sense if it is constant.
\end{proof}

\begin{remark} \normalfont 
We notice that this result especially holds in the case where the submersion has totally geodesic fiber or is harmonic. In fact, the horizontal lift of $\vec{H}_{F}$ is $\vec{H}_{\widetilde{F}}$ if and only if the fibers are minimal with respect to the normal vector $\nu_{\widetilde{F}}$, i.e. $H_{\mathrm{fib}}^{\nu_{\widetilde{F}}}=0$. We point out that only the minimality in the direction of $\nu_{\tilde{F}}$ is needed. If  for every point $x \in M$ there is a basis of tangent vectors for $T_{(x,0)} (B\times\mathbb{R})$ and graphical solitons with these as normal vectors then the harmonicity of the submersion $\pi$ is equivalent to the condition that, for every $u$, $F$ is a graphical soliton if and only if $\widetilde{F}$ is so.   
\end{remark}
From Theorem \ref{bundle2} we get immediately the following 
\begin{corollary}\label{cor_harm_sub}
Let $\pi:(M,g_M)\rightarrow (B,g_B)$ be a harmonic semi-Riemannian submersion. Let $u \in C^2(B)$, $F:B\rightarrow B\times\R$, $F(x)=(x,u(x))$ be its graph map, and $\widetilde{F}:M\rightarrow M\times\R$ be the map $\widetilde{F}(x)=(x,\pi\circ u(x))$. 
Then $\widetilde{F}$ is a graphical translating soliton if and only if $F$ is a graphical  translating soliton.
\end{corollary}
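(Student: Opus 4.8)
The plan is to deduce the statement directly from Theorem \ref{bundle2}, whose hypotheses are almost verbatim those of the corollary; the only work is to show that harmonicity kills the extra term $H_{\mathrm{fib}}^{\nu_{\widetilde{F}}}\nu_F$.

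First I would recall, from the discussion of the O'Neill tensors in the Setup section, that a semi-Riemannian submersion $\pi$ is harmonic precisely when the mean curvature of its fibers vanishes, i.e. $h\equiv 0$; equivalently, the mean curvature vector $\vec{H}_{\mathrm{fib}}$ of every fiber $\pi^{-1}(p)\subset M$ is zero. In particular the fibers have constant (namely zero) mean curvature, so the standing assumption of Theorem \ref{bundle2} is satisfied and that theorem may be invoked.

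Next I would transfer this to the product submersion $\pi\times 1:(M\times\R,g_M+\ep dt^2)\to(B\times\R,g_B+\ep dt^2)$ used in the proof of Theorem \ref{bundle2}. Its fibers are $\pi^{-1}(p)\times\{t\}$, whose tangent space at each point is $T_x\pi^{-1}(p)\oplus 0$, while the normal directions are $\mathcal{H}_x\oplus\R\dt$. Since the ambient metric is a direct product and $\dt$ is parallel, for vertical $V,W$ the covariant derivative $\nab_V W$ has no $\dt$-component, so the second fundamental form of $\pi^{-1}(p)\times\{t\}$ in $M\times\R$ coincides with that of $\pi^{-1}(p)$ in $M$. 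Consequently the mean curvature vector of each fiber of $\pi\times 1$ equals $\vec{H}_{\mathrm{fib}}=0$, and therefore $H_{\mathrm{fib}}^{\nu_{\widetilde{F}}}=\produ{\vec{H}_{\mathrm{fib}},\nu_{\widetilde{F}}}=0$ for any horizontal normal, in particular for $\nu_{\widetilde{F}}$.

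Finally I would substitute $H_{\mathrm{fib}}^{\nu_{\widetilde{F}}}=0$ into the conclusion of Theorem \ref{bundle2}: the condition $\vec{H}_F=\dt^{\perp}+H_{\mathrm{fib}}^{\nu_{\widetilde{F}}}\nu_F$ collapses to $\vec{H}_F=\dt^{\perp}$, which is exactly the defining equation of a graphical translating soliton for $F$. Since Theorem \ref{bundle2} asserts that this condition is equivalent to $\widetilde{F}$ being a graphical translating soliton, the desired equivalence follows at once. The only genuinely delicate point is the identification $H_{\mathrm{fib}}^{\nu_{\widetilde{F}}}=0$: one must note that harmonicity of $\pi$ forces the \emph{full} mean curvature vector of the fibers to vanish, not merely its component along one distinguished normal, so that the product fibers remain minimal and the projected term vanishes for whatever $\nu_{\widetilde{F}}$ arises; everything else is a direct quotation of the theorem.
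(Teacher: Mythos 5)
Your proposal is correct and follows the paper's own route exactly: the paper deduces the corollary ``immediately'' from Theorem \ref{bundle2} together with the fact (recalled in its Setup section and in the remark following the theorem) that harmonicity of $\pi$ is equivalent to minimality of the fibers, so that $H_{\mathrm{fib}}^{\nu_{\widetilde{F}}}=0$ and the perturbed equation collapses to $\vec{H}_F=\partial_t^{\perp}$. Your only addition is to spell out why the fibers of $\pi\times 1$ in the product metric inherit vanishing mean curvature from those of $\pi$, a detail the paper leaves implicit.
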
 

In view of the preceding theorem, we will call $F$ an $H$-perturbed soliton. Obviously, by equation \eqref{fundamentalequation}, 
$F$ is an $H$-perturbed soliton if and only if $u$ satisfies
\begin{eqnarray}\label{h_perturbed_eq}
\mathrm{div}\left(\frac{ \grad u } { \sqrt{\ep'\big(\ep+\vert \grad u\vert_g^2\big)}} \right)=
\frac{1}{ \sqrt{\ep'\big(\ep+\vert \grad u\vert_g^2\big)}} -H_{\mathrm{fib}}^{\nu_{\widetilde{F}}}
\end{eqnarray}
We are now going to specialize to the case where the base $B$ is one-dimensional. We will show that in that case equation \eqref{h_perturbed_eq} can be reduced to a particular ODE.

We consider a semi-Riemannian submersion $\pi:(M,g_M)\rightarrow(I,\widetilde{\varepsilon}ds^2)$, where $I$ is an open interval. Needless to say, for each $s\in I$, the fiber $\pi^{-1}(s)$ is a hypersurface in $M$. We assume that each fiber has constant mean curvature (CMC), so that for each $s\in I$, we can call $h(s)$ the value of the mean curvature of $\pi^{-1}(s)$. 
Along the paper, we will  say that $h$ \textit{represents the mean curvature of the fibers.} 
\begin{theorem}\label{equiv_ODE}
Let $I$ be an open interval and $\pi:(M,g_M)\rightarrow(I,\widetilde{\varepsilon}ds^2)$ be a semi-Riemannian submersion with CMC fibers, and function $h$ representing the mean curvature of the fibers. Given  $u\in C^2(M)$ which is constant along the fibers of $\pi$, let  $\widetilde{F}:M\rightarrow M\times\R$, $\widetilde{F}(x)=(x,u(x))$, be its graph map, and $f:I\rightarrow \mathbb{R}$ be the map such that $u=f\circ\pi$. Then $\widetilde{F}$ is a translating soliton if and only if function $f$ is a solution to \eqref{keyODE}. 
\end{theorem}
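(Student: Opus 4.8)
The plan is to reduce the statement to the one-dimensional base case of Theorem \ref{bundle2} and then to turn the resulting scalar $H$-perturbed equation into the ODE \eqref{keyODE} by an explicit computation. Since $u=f\circ\pi$ is constant along the fibres, Theorem \ref{bundle2} applies verbatim with $B=I$ and base function $f$: it asserts that $\widetilde F$ is a translating soliton if and only if $f$ satisfies the $H$-perturbed equation \eqref{h_perturbed_eq} on $(I,\te\,ds^2)$. Thus I would first record the one-dimensional data. With $g_B=\te\,ds^2$ one has $\grad f=\te f'\,\partial_s$, hence $\vert\grad f\vert^2_{g_B}=\te\,(f')^2$ and, by \eqref{normal}, $W=\sqrt{\ep'(\ep+\te\,(f')^2)}$, while the normal of the lifted graph is $\nu_{\widetilde F}=\frac1W(-\ep\,\te f'\,N+\dt)$, where $N$ is the horizontal lift of $\partial_s$ (the unit normal to the fibre, with $\produ{N,N}=\te$).

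Next I would compute the two sides of \eqref{h_perturbed_eq} separately. The left-hand side is a genuine one-dimensional divergence: since the volume density of $\te\,ds^2$ is $1$, one has $\mathrm{div}(\grad f/W)=\partial_s(\te f'/W)$, and a short computation using $W'=\ep'\te f'f''/W$ together with $\te W^2-\ep'(f')^2=\ep'\te\ep$ yields
\begin{equation*}
\mathrm{div}\Big(\frac{\grad f}{W}\Big)=\frac{\ep\,\ep'\,\te\, f''}{W^3}.
\end{equation*}
This step is routine calculus and carries no real difficulty.

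The heart of the matter is to identify the fibre term $H_{\mathrm{fib}}^{\nu_{\widetilde F}}$ with $f'h/W$. Here I would exploit the product structure: the fibres of $\pi\times 1$ are $\pi^{-1}(s)\times\{t\}$, and since $\dt$ is parallel in $M\times\R$ and tangent to the $\R$-factor, it contributes nothing to their second fundamental form; consequently the fibre mean curvature vector lies along the horizontal fibre-normal $N$ and agrees with that of $\pi^{-1}(s)$ in $M$, whose value is governed by the CMC function $h(s)$. Projecting this vector onto $\nu_{\widetilde F}$ and using $\produ{N,\nu_{\widetilde F}}=-\ep f'/W$ (from the expression for $\nu_{\widetilde F}$ above and $\produ{N,N}=\te$) collapses, after tracking the sign conventions fixed in the definition of $h$ and in the proof of Theorem \ref{bundle2}, to
\begin{equation*}
H_{\mathrm{fib}}^{\nu_{\widetilde F}}=\frac{f'(s)\,h(s)}{W}.
\end{equation*}
I expect this to be the main obstacle: the computation itself is short, but reconciling the various sign normalizations ($\te$, $\ep$, $\ep'$, the orientation of $N$, and the convention by which $h$ represents the fibre mean curvature) so that no stray sign survives is the delicate point.

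Finally I would substitute both pieces into \eqref{h_perturbed_eq}, obtaining $\ep\ep'\te f''/W^3=1/W-f'h/W$. Clearing $W^3$ gives $\ep\ep'\te f''=W^2(1-f'h)=\ep'(\ep+\te(f')^2)(1-f'h)$; cancelling $\ep'$ and multiplying through by $\te\ep$ (so that $\te\ep(\ep+\te(f')^2)=\te+\ep(f')^2$ by $\te^2=\ep^2=1$) produces exactly $f''=(\te+\ep(f')^2)(1-f'h)$, i.e.\ \eqref{keyODE}. Since every step is an equivalence, this establishes both implications at once.
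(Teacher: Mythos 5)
Your proposal is correct and follows essentially the same route as the paper: reduce to Theorem \ref{bundle2} with one-dimensional base, compute the one-dimensional divergence term, identify $H_{\mathrm{fib}}^{\nu_{\widetilde F}}=f'h/W$ by noting that $\partial_t$ is parallel (so only the $N$-component of $\nu_{\widetilde F}$ contributes, weighted by its coefficient), and simplify to \eqref{keyODE}. The sign-reconciliation you flag as the delicate point is indeed where the paper itself is loosest (its proof of Theorem \ref{equiv_ODE} even uses a normal whose $\partial_t$-component disagrees with \eqref{normal}, harmlessly, since $H^{\partial_t}_{\mathrm{fib}}=0$), and your intermediate identities and final equation agree with the paper's.
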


\begin{proof}
By Theorem \ref{bundle2}, and since the base is one-dimensional, $F$ is a translating soliton if and only if 
\begin{eqnarray}\label{h_perturbed_eq_onedim}
\widetilde{\varepsilon}\left(\frac{f' } { \sqrt{\ep'\big(\ep+\widetilde{\varepsilon} f'^2\big)}} \right)'=
\frac{1}{ \sqrt{\ep'\big(\ep+\widetilde{\varepsilon}f'^2\big)}} -H_{\mathrm{fib}}^{\nu_{\widetilde{F}}}
\end{eqnarray}
Since $\pi$ is a semi-Riemannian submersion, $\vert\nabla\pi\vert^2=\widetilde{\varepsilon}$, where $\widetilde{\varepsilon}=\pm 1$ matches the signature of the base $I$. Then we have $(f\circ \pi)'=(f'\circ\pi)\nabla\pi$. After computing out the term on the left-hand side of equation \eqref{h_perturbed_eq_onedim}, we get therefore 
\begin{eqnarray*}
\widetilde{\varepsilon}\frac{f''} { \sqrt{\ep'\big(\ep+\widetilde{\varepsilon}f'^2\big)}}\left(1-\frac{\widetilde{\varepsilon}f'^2}{\ep+\widetilde{\varepsilon}f'^2}\right)=
\frac{1}{ \sqrt{\ep'\big(\ep+\widetilde{\varepsilon} f'^2\big)}} -H_{\mathrm{fib}}^{\nu_{\widetilde{F}}}
\end{eqnarray*}
Let us now point out the following fact about mean curvature. Let $H^N$ be the mean curvature of a submanifold $(Z,g_Z)$ with respect to a unit normal vector $N=\sum_ia_i\xi_i$, where $\{\xi_i\}$ are unit normal vectors to the submanifold  and $a_i$ are functions on the submanifold. Then if $\{e_i\}$ is an orthonormal basis of the tangent bundle to the submanifold we have by definition 
\[H^{N}=\sum_ig_Z(\nabla_{e_i}(\sum_ja_j\xi_j),e_i)=\sum_i\sum_ja_jg_Z(\nabla_{e_i}\xi_i,e_i)+g_Z(\partial_{e_i}(a)\xi_i,e_i)=\sum_ia_iH^{\xi_i}\]
We now compute the mean curvature $H_{\mathrm{fib}}^{\nu_{\widetilde{F}}}$ of the leaves $\pi^{-1}(x)$ in the direction of the normal vector in $M\times\mathbb{R}$. We have that
\[\nu_{\widetilde{F}}=\frac{1}{\sqrt{\varepsilon'\big((\nabla(f\circ\pi))^2+\varepsilon}\big)}(\nabla(f\circ\pi),-1)=\frac{1}{\sqrt{\varepsilon'\big((f'\circ\pi)^2|\nabla\pi|^2+\varepsilon\big)}}((f'\circ\pi)\nabla\pi-\partial_t)\]
Using the preceding computations, and by the fact that the mean curvature vanishes in the direction of $\partial_t$, we get consequently
\[H_{\mathrm{fib}}^{\nu_{\widetilde{F}}}=\frac{(f'\circ\pi)}{\sqrt{(f'\circ\pi)^2\widetilde{\varepsilon}+\varepsilon}}H_{\mathrm{fib}}^{\nabla\pi}.\]
Now the mean curvature of the fibers inside of $M$ is the mean curvature with respect to the unit normal vector $\nabla\pi$. Since the leaves have constant mean curvature, there exists a function $h:I\rightarrow \mathbb{R}$, such that $H_{\mathrm{fib}}^{\nabla\pi}=h\circ\pi$.
Hence equation \eqref{h_perturbed_eq_onedim} becomes after computing out the term on its left-hand side
\begin{eqnarray*}
\widetilde{\varepsilon}\frac{f''} { \sqrt{\ep'\big(\ep+\widetilde{\varepsilon}f'^2\big)}}\left(1-\frac{\widetilde{\varepsilon}f'^2}{\ep+\widetilde{\varepsilon}f'^2}\right)=
\frac{1}{ \sqrt{\ep'\big(\ep+\widetilde{\varepsilon} f'^2\big)}}-\frac{f'}{\sqrt{\ep'\big(\ep+\widetilde{\varepsilon} f'^2\big)}}h.
\end{eqnarray*}
and after simplifying
\begin{eqnarray*}
f''=(\te+\ep f'^2)(1-f'h), 
\end{eqnarray*}
which proves the result.
\end{proof}
\section{Lie Groups}

A particular example where the condition of the previous theorem are satisfied is the case of a manifold with a Lie group acting by isometries whose orbits give a foliation by codimension one submanifolds. Note that we have CMC fibers, so we can construct the map $h$ representing the mean curvature of the fibers. In this section $I$ will always be an open interval.

\begin{proposition}\label{prop_Liegroup_is_submersionwithCMCfibers}
Let $(M,g)$ be a connected semi-Riemannian manifold. Let $\Sigma$ be a Lie group acting by isometries on $M$ and $\pi:M\rightarrow I$ be a submersion such that the fibers of $\pi$ are orbits of the action. Moreover assume that the gradient of the projection $\nabla\pi$ is nowhere lightlike. Then, there exists $\widetilde{\ep} \in \{\pm 1\}$ and a map $v:I\rightarrow(\R,\widetilde{\ep} ds^2)$, such that $v\circ\pi$ is a semi-Riemannian submersion with constant mean curvature fibers.
\end{proposition}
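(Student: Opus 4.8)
The plan is to leave the fibers (and hence the CMC condition) untouched and merely reparametrize the one-dimensional base so that the composition becomes a \emph{semi-Riemannian} submersion. The whole construction hinges on the observation that every relevant quantity is constant along the orbits, and therefore descends to a function on $I$.

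First I would record that $\pi$ is $\Sigma$-invariant. Since each fiber $\pi^{-1}(s)$ is an orbit, for every $\sigma\in\Sigma$ and every $x\in M$ the point $\sigma(x)$ lies in the same orbit as $x$, whence $\pi(\sigma(x))=\pi(x)$, i.e.\ $\pi\circ\sigma=\pi$. As each $\sigma$ is an isometry fixing $\pi$, it fixes the gradient, $d\sigma(\na\pi_x)=\na\pi_{\sigma(x)}$, so $\vert\na\pi\vert^2$ is constant along each orbit. Because $\pi$ is a submersion (hence admits local sections), this constancy means $\vert\na\pi\vert^2$ factors through $\pi$: there is a smooth function $\rho:I\to\R$ with $\vert\na\pi\vert^2=\rho\circ\pi$. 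The same invariance shows that the unit normal field $\na\pi/\vert\na\pi\vert$ is $\Sigma$-invariant; since ambient isometries preserve second fundamental forms, the scalar mean curvature of each fiber with respect to this normal is constant along the orbit. Thus the fibers are automatically CMC, and a reparametrization of the base, which leaves the fibers unchanged, will not affect this.

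Next I would exploit the nowhere-lightlike hypothesis, which says exactly that $\vert\na\pi\vert^2$ never vanishes, so $\rho$ is nowhere zero; as $I$ is connected, $\rho$ has a constant sign, which I call $\te\in\{\pm1\}$, so that $\te\rho=\vert\rho\vert>0$ on $I$. I then define $v:I\to\R$ by $v(s)=\int_{s_0}^s\vert\rho(r)\vert^{-1/2}\,dr$ for a fixed $s_0\in I$. Since $v'=\vert\rho\vert^{-1/2}>0$ is smooth, $v$ is a smooth strictly increasing diffeomorphism onto an open interval of $\R$; in particular $v\circ\pi$ is again a submersion whose fibers coincide with those of $\pi$.

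It then only remains to check the normalization. Using $\na(v\circ\pi)=(v'\circ\pi)\,\na\pi$ I would compute
\[
\vert\na(v\circ\pi)\vert^2=(v'\circ\pi)^2\,\vert\na\pi\vert^2=\Big(\tfrac{1}{\vert\rho\vert}\,\rho\Big)\circ\pi=\mathrm{sign}(\rho)=\te,
\]
which is precisely the condition, recorded in the proof of Theorem~\ref{equiv_ODE}, for $v\circ\pi:M\to(\R,\te\,ds^2)$ to be a semi-Riemannian submersion. Together with the CMC property established above, this proves the claim. The only genuinely delicate points are the descent of $\vert\na\pi\vert^2$ and of the mean curvature to functions on $I$ --- which rely on the isometric action through the identity $\pi\circ\sigma=\pi$ --- and the constancy of the sign of $\rho$, for which both the connectedness of $M$ (hence of $I$) and the nowhere-lightlike assumption are essential.
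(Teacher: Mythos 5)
Your proof is correct and follows essentially the same route as the paper's: both establish that $\vert\nabla\pi\vert^2$ and the fiber mean curvature are constant along the orbits, fix the sign $\te$ via the nowhere-lightlike hypothesis, and reparametrize the base by a function $v$ with $v'=\big(\sqrt{\te\vert\nabla\pi\vert^2}\big)^{-1}$ so that $\vert\nabla(v\circ\pi)\vert^2=\te$. The only difference is that you make explicit some steps the paper leaves implicit (the descent of $\vert\nabla\pi\vert^2$ to a function $\rho$ on $I$, the integral formula defining $v$, and the isometry argument for the CMC property), which is a matter of exposition rather than substance.
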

\begin{proof} Since $\grad\pi$ is never zero or lightlike, $\mathrm{sign}(g(\grad\pi,\grad\pi))=\te=\pm 1$ is constant. 
In addition, since the fibers of $\pi$ are orbits of $\Sigma$, the length of $\grad\pi$ is constant along them. Then, let $v:I\rightarrow(\mathbb{R},\te ds^2)$ be a function such that $v'=\big(\sqrt{\widetilde{\ep}|\nabla\pi|^2}\big)^{-1}$. It follows that $v\circ\pi$ satisfies $|\nabla (v \circ \pi)|^2 = \widetilde{\ep}$.  Therefore it is a semi-Riemannian submersion.  Next, since the fibers of $\pi$ are orbits of the Lie group action, the mean curvature must be constant along them. As $v'$ has constant sign, $v$ must be injective, hence the fibers of $v \circ \pi$ are the same as the fibers of $\pi$.  It follows that its fibers have constant mean curvature  $H_{\mathrm{fib}}=\mathrm{div}(\nabla (v\circ\pi))$.
\end{proof}
From a technical point of view, the proposition shows that there is no loss of generality if we assume that $\vert\grad\pi\vert^2_g=\te$. and $\mathrm{div}(\grad \pi) = h\circ\pi=H^{\nabla{\pi}}_{\mathrm{fib}}$, for a suitable function $h:I\rightarrow\R$. Note that it is necessary for the Lie group to act by isometries. Indeed, consider the standard flat Riemannian metric on $\R^2$ and the action of the Boost Group on $\R^2$. Then, a simple computation shows that we cannot obtain the constancy of the length of $\grad\pi$ (see Example \ref{Magda}) .

We recall that a function $u:M\rightarrow\R$ is called \textit{invariant} by the Lie group $\Sigma$, or also $\Sigma$-invariant,  if it satisfies
\begin{equation}
\label{invariance} u:M\rightarrow\R, \quad u(x)=u(\sigma\cdot x), \ \forall x\in M, \ \forall \sigma\in \Sigma. 
\end{equation}
Accordingly, we will say that a graphical translating soliton is \textit{invariant} by the Lie group $\Sigma$, or also $\Sigma$-invariant, when its graph map is invariant by $\Sigma$.  We are now ready to prove the following

\begin{theorem} \label{existence}   Let $(M,g)$ be a connected semi-Riemannian manifold. Let $\Sigma$ be a Lie group acting by isometries on $M$ and $\pi:(M,g_M)\rightarrow (I,\widetilde{\varepsilon}ds^2)$ be a semi-Riemannian submersion such that the fibers of $\pi$ are orbits of the action, with function $h$ representing the mean curvature of the orbits. Take $u\in C^2(M,\R)$ and consider its graph map $F:M\rightarrow M\times\R$, $F(x)=(x,u(x))$ for any $x\in M$. Then, $F$ is a $\Sigma$-invariant translating soliton if, and only if, there exists a solution $f\in C^2(I,\R)$ to the ODE \eqref{keyODE} such that $u=f\circ \pi$.
\end{theorem}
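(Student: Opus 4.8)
The plan is to recognize that this statement is, up to one extra observation, a direct consequence of Theorem~\ref{equiv_ODE}. The hypotheses here already furnish a semi-Riemannian submersion $\pi:(M,g_M)\to (I,\te ds^2)$ with CMC fibers together with a representing function $h$, so the only gap between the present statement and Theorem~\ref{equiv_ODE} is that the latter requires $u$ to be constant along the fibers of $\pi$, whereas here we instead impose $\Sigma$-invariance of $F$. The crucial point, which I would isolate first, is that because the fibers of $\pi$ are exactly the orbits of $\Sigma$, the two conditions coincide: a function $u$ is $\Sigma$-invariant in the sense of~\eqref{invariance} if and only if it is constant on each orbit, i.e. on each fiber $\pi^{-1}(s)$, which is in turn equivalent to the existence of a function $f:I\to\R$ with $u=f\circ\pi$. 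Note also that $F$ being a $\Sigma$-invariant graph map is precisely the same as $u$ being $\Sigma$-invariant.

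For the forward implication, assume $F$ is a $\Sigma$-invariant translating soliton. Then $u$ is $\Sigma$-invariant, hence constant along fibers, so $u=f\circ\pi$ for some $f:I\to\R$. I would then check that $f$ is of class $C^2$: since $\pi$ is a submersion it admits local $C^2$ sections $\s:J\to M$ over open sets $J\subset I$, and on each such $J$ one has $f=u\circ \s$, a composition of $C^2$ maps. With $u=f\circ\pi$ and $u$ constant along fibers now established, Theorem~\ref{equiv_ODE} applies verbatim (the $F$ here playing the role of $\widetilde{F}$ there) and shows that, $F$ being a translating soliton, $f$ solves~\eqref{keyODE}.

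For the converse, suppose $f\in C^2(I,\R)$ solves~\eqref{keyODE} and $u=f\circ\pi$. Since $\pi$ is constant on the orbits of $\Sigma$ (these being the fibers), $u=f\circ\pi$ is $\Sigma$-invariant, and therefore its graph map $F$ is $\Sigma$-invariant. Moreover $u$ is by construction constant along fibers, so Theorem~\ref{equiv_ODE} again applies and, as $f$ solves~\eqref{keyODE}, guarantees that $F$ is a translating soliton. This closes the equivalence.

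The computational heart of the matter, namely the reduction of the soliton PDE~\eqref{fundamentalequation} to the ODE~\eqref{keyODE}, is already contained in Theorem~\ref{equiv_ODE}, so no new calculation is required. The only steps demanding genuine care are the identification of $\Sigma$-invariance with constancy along fibers---which rests entirely on the hypothesis that the fibers of $\pi$ are the orbits of the action---and the verification that the descended function $f$ inherits $C^2$ regularity, handled through local sections of the submersion. I expect the former to be the main conceptual obstacle, since it is exactly where the Lie-group structure genuinely enters, whereas the regularity point is routine.
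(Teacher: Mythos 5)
Your proposal is correct and follows essentially the same route as the paper: both reduce the statement to Theorem \ref{equiv_ODE} by observing that, since the fibers of $\pi$ are the $\Sigma$-orbits, $\Sigma$-invariance of $u$ is the same as constancy along fibers, i.e.\ $u=f\circ\pi$ for some $f:I\to\R$. The paper's proof is terser (it simply cites Proposition \ref{prop_Liegroup_is_submersionwithCMCfibers} and Theorem \ref{equiv_ODE}), while you additionally spell out the $C^2$ regularity of $f$ via local sections and treat both implications explicitly, which are worthwhile details but not a different argument.
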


\begin{proof}
The fact that $u$ is invariant by $\Sigma$ readily shows the existence of a smooth function $f: I\rightarrow \R$ such that $u=f\circ \pi=f(\pi)$. By using Proposition \ref{prop_Liegroup_is_submersionwithCMCfibers}, we can assume that $\pi:M\to I$ is a semi-Riemannian submersion with constant mean curvature fibers. Hence, Theorem \ref{existence} follows directly from Theorem \ref{equiv_ODE}.
\end{proof}
Proposition \ref{prop_Liegroup_is_submersionwithCMCfibers} actually shows that we can refine 
Theorem  \ref{existence} in the following more general, but also more technical way.

\begin{corollary} 
\label{existence2} Let $(M,g)$ be a connected semi-Riemannian manifold. Let $\Sigma$ be a Lie group acting by isometries on $M$ and $\pi:(M,g_M)\rightarrow (I,\widetilde{\varepsilon}ds^2)$ be a  submersion such that the fibers of $\pi$ are orbits of the action, with function $h:I\to\R$  representing the mean curvature of the fibers. Assume that the gradient of the projection $\nabla\pi$ is nowhere lightlike. Consider $u\in C^2(M,\R)$ and its graph map  $F:M\rightarrow M\times\R$, $F(x)=(x,u(x))$ for any $x\in M$. Then, $F$ is a $\Sigma$-invariant translating soliton if and only if there exists a solution $f\in C^2(I,\R)$ to the ODE \eqref{keyODE}, such that $u=f\circ v\circ\pi$, with $v:I\rightarrow\R$ a map satisfying $v'=\big(\sqrt{\widetilde{\ep}|\nabla\pi|^2}\big)^{-1}$.
\end{corollary}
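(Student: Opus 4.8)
The plan is to reduce Corollary~\ref{existence2} to Theorem~\ref{existence} by first replacing the merely topological submersion $\pi$ with a genuine semi-Riemannian submersion, using the reparametrisation supplied by Proposition~\ref{prop_Liegroup_is_submersionwithCMCfibers}. Concretely, since $\Sigma$ acts by isometries, $\pi$ is a submersion whose fibers are the orbits, and $\grad\pi$ is nowhere lightlike, that proposition furnishes $\te\in\{\pm1\}$ and a map $v:I\to(\R,\te\,ds^2)$ with $v'=\big(\sqrt{\te|\grad\pi|^2}\big)^{-1}$ such that $\hat\pi:=v\circ\pi$ is a semi-Riemannian submersion whose fibers have constant mean curvature. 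First I would record that, because $v'$ has constant sign, $v$ is a diffeomorphism onto $v(I)$; hence the fibers of $\hat\pi$ coincide pointwise with those of $\pi$, i.e. with the $\Sigma$-orbits, so $\hat\pi$ still satisfies the hypotheses of Theorem~\ref{existence}.

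Next I would translate the invariance hypothesis. A graph map $F$ is $\Sigma$-invariant exactly when $u$ is $\Sigma$-invariant, i.e. constant along the orbits. Since these orbits are precisely the fibers of $\hat\pi$, the function $u$ is constant along the fibers of $\hat\pi$ and therefore descends: there is an $f$ on $v(I)$ with $u=f\circ\hat\pi=f\circ v\circ\pi$, which is exactly the factorisation claimed. Conversely, any $u$ of this form is automatically $\Sigma$-invariant because $v\circ\pi$ is constant on the orbits. With $\hat\pi$ now a semi-Riemannian submersion with CMC fibers equal to the orbits of $\Sigma$, I would simply invoke Theorem~\ref{existence} with $\hat\pi$ in the role of $\pi$: this yields that $F$ is a $\Sigma$-invariant translating soliton if and only if $f$ solves the ODE~\eqref{keyODE}, where $h$ represents the mean curvature of the fibers of $\hat\pi$.

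It remains to check that this $h$ is the same geometric datum appearing in the statement. Since $\grad\hat\pi=(v'\circ\pi)\,\grad\pi$ and $v'$ has constant (positive) sign, $\grad\hat\pi$ and $\grad\pi$ determine the same unit normal direction to each orbit, so the mean curvature of a fiber with respect to that normal is unchanged by the reparametrisation. Hence the function $h$ representing the mean curvature of the orbits is well defined independently of whether one works with $\pi$ or with $\hat\pi$, and \eqref{keyODE} is the same ODE in both pictures, up to the monotone change of variable induced by $v$, with $f$ and $h$ naturally living on $v(I)$ identified with $I$ through the diffeomorphism $v$.

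The substance is thus a bookkeeping combination of Proposition~\ref{prop_Liegroup_is_submersionwithCMCfibers} and Theorem~\ref{existence}, with no genuine analysis involved. The only point requiring real care—the expected main obstacle—is the consistent identification across the reparametrisation: one must confirm that passing from $\pi$ to the unit-normal submersion $\hat\pi$ changes neither the fibers nor the normal direction along which mean curvature is measured, and must keep track of the fact that the independent variable of \eqref{keyODE} lives on $v(I)$ rather than on $I$ itself. Once this identification is made transparent, the equivalence follows at once.
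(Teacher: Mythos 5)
Your proposal is correct and follows exactly the route the paper intends: the paper offers no written proof of Corollary~\ref{existence2}, merely remarking that it follows by combining Proposition~\ref{prop_Liegroup_is_submersionwithCMCfibers} with Theorem~\ref{existence}, which is precisely your reduction via $\hat\pi = v\circ\pi$. Your additional bookkeeping (the fibers of $\hat\pi$ equal the $\Sigma$-orbits since $v'>0$, the unit normal direction and hence the fiber mean curvature are unchanged, and $f$, $h$ live on $v(I)$ identified with $I$) is exactly the care the paper leaves implicit.
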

Before studying examples, we want to give some results about the behavior of solutions of the ODE \eqref{keyODE} depending on the function $h$ and the signs $\ep$ and $\te$.
\begin{corollary} \label{oide} Let $(M,g)$ be a connected semi-Riemannian manifold. Let $\Sigma$ be a Lie group acting by isometries on $M$ and $\pi:M\rightarrow I$ be a submersion such that the fibers of $\pi$ are orbits of the action. Moreover assume that $\vert\grad \pi\vert^2=\te=\pm 1$ and $\mathrm{div}(\grad\pi)=h\circ\pi$, for some function $h: I\to\mathbb{R}$. Then for each $y_o\in \R$ and each $s_o\in I$ such that $h(s_o)\neq 0$, there exist a real number $\rho>0$ and a translating soliton $F:(y_0-\rho,y_o+\rho)\times\Sigma \rightarrow\bar{M}$ such that it is the union of two graphical translating solitons. 
\end{corollary}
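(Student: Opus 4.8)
The plan is to desingularise the ODE \eqref{keyODE} by using the height (the $\R$-factor) instead of $s\in I$ as independent variable. A graphical soliton over $M$ cannot be continued past a point where its profile $f$ acquires a vertical tangent, i.e. where $f'\to\infty$; this is exactly the ``neck'' of the translating catenoid, where two graphical sheets meet. So first I would describe the profile curve in the quotient plane $I\times\R$ in the form $s=\phi(y)$, the inverse of $f$. Differentiating $f\circ\phi=\mathrm{id}$ gives $f'=1/\phi'$ and $f''=-\phi''/\phi'^3$, and substituting into \eqref{keyODE} and clearing $\phi'^3$ turns it into the regular second-order equation
\[
\phi''(y)=\bigl(\te\,\phi'(y)^2+\ep\bigr)\bigl(h(\phi(y))-\phi'(y)\bigr),
\]
whose right-hand side is smooth in $(\phi,\phi')$ (locally Lipschitz once $h$ is $C^1$) and, crucially, has no singularity at $\phi'=0$, the locus $f'=\infty$.

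Next I would invoke standard ODE theory. Fixing $y_o$ and $s_o$ with $h(s_o)\neq 0$, I solve the displayed equation with initial data $\phi(y_o)=s_o$, $\phi'(y_o)=0$ (the vanishing of $\phi'$ encoding the vertical tangent), obtaining by Picard--Lindel\"of a $C^2$ solution on some $(y_o-\rho,y_o+\rho)$. The hypothesis $h(s_o)\neq 0$ enters precisely here: evaluating the equation at $y_o$ gives $\phi''(y_o)=\ep\,h(s_o)\neq 0$, so $y_o$ is a strict local extremum of $\phi$. After shrinking $\rho$, $\phi'$ is nonzero and of opposite signs on the two half-intervals $(y_o-\rho,y_o)$ and $(y_o,y_o+\rho)$; on each one $\phi$ is a strict monotone diffeomorphism onto its image, hence invertible to a $C^2$ function $f$ solving \eqref{keyODE} on a subinterval of $I$. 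By Theorem \ref{existence} each such $f$ yields a $\Sigma$-invariant graphical translating soliton over the corresponding saturated open set in $M$.

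It then remains to glue the two sheets. I would fix $x_o\in\pi^{-1}(s_o)$ and an integral curve $\beta$ of $\grad\pi$ normalised so that $\pi(\beta(s))=s$, and set
\[
F:(y_o-\rho,y_o+\rho)\times\Sigma\rightarrow\bar{M},\qquad F(y,\sigma)=\bigl(\sigma\cdot\beta(\phi(y)),\,y\bigr).
\]
Since $\Sigma$ acts by isometries preserving the fibers, $\pi\bigl(\sigma\cdot\beta(\phi(y))\bigr)=\phi(y)$, and the height $y$ equals $f\circ\pi$ on each branch, so $F$ does parametrise the union of the two graphical solitons. The decisive gain is that, unlike the graph of $f$, this parametrisation is regular across $y_o$: the profile tangent $(\phi'(y),1)$ never vanishes, so $F$ is a smooth immersion on the whole interval, including the neck fiber $\{y=y_o\}$.

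The hard part, and the only genuinely non-routine step, is showing that $F$ is a bona fide semi-Riemannian translating soliton \emph{at} the neck, not merely two pieces that touch. Two things must be checked. First, non-degeneracy of the induced metric there: along the $\Sigma$-invariant family $|\grad u|_g^2=\te f'^2$, so $W^2=\ep'(\ep+\te f'^2)\to+\infty$ as $f'\to\infty$ and $\ep'=\mathrm{sign}(\te)$ is forced near the neck; choosing $\rho$ small keeps $\ep'$ constant and $W$ bounded away from zero, so $F$ remains a semi-Riemannian hypersurface. Second, the soliton identity $\vec{H}=\dt^{\perp}$ is a pointwise, closed condition on the smooth immersion $F$; it holds on the dense open set $\{y\neq y_o\}$ by the previous paragraph, hence extends by continuity to $y=y_o$. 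Together these show $F$ is the desired translating soliton, realised as the union of two graphical ones.
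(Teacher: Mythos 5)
Your proposal follows essentially the same route as the paper: invert the profile so the height becomes the independent variable, thereby desingularising \eqref{keyODE} at the vertical tangent, solve the resulting regular ODE with $\phi(y_o)=s_o$, $\phi'(y_o)=0$, and use $h(s_o)\neq 0$ to get a strict extremum whose two monotone branches invert to graphical solitons meeting smoothly at the neck --- exactly the paper's argument, except that the paper delegates the neck construction to Lemma 2.3 of \cite{CSS} while you carry out its details (Picard--Lindel\"of, gluing, non-degeneracy, and continuity of the soliton equation). Incidentally, your inverted equation $\phi''=\bigl(\te\,\phi'^2+\ep\bigr)\bigl(h(\phi)-\phi'\bigr)$ is the correct outcome of the substitution $f'=1/\phi'$, $f''=-\phi''/\phi'^3$; the paper's displayed version transposes $\ep$ and $\te$ in the first factor, a slip that is invisible in the Riemannian case $\ep=\te$ but matters in the mixed-signature cases.
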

\begin{proof} If $f'(s_o)\neq 0$, then, there is a small interval $J$ around $y_o=f(s_o)\in \R$ such that $\alpha=f^{-1}$ is well-defined on it. Given $y\in J$, we have according to \eqref{keyODE}, 
\[ -\frac{\alpha''(y)}{\alpha'(y)^3} = \left( \ep +\frac{\te}{\alpha'(y)^2}\right)\left(1-\frac{h(\alpha(y))}{\alpha'(y)}\right),
\]
which is 
\[ \alpha''(y) = \left(\te +\ep \alpha'(y)^2\right) \left( h(\alpha(y)) - \alpha'(y)\right).
\]
From here, we can follow the proof of Lemma 2.3 in \cite{CSS}. 
\end{proof}
\begin{corollary} \label{globalexamples}
Under the same conditions, assume that $\ep\te =-1$. Then, given $s_o\in I$, $f_1\in (-1,1)$ and $f_o\in \R$, there exists a solution $f:I\rightarrow\R$ to \eqref{keyODE} such that $f(s_o)=f_0$ and $f'(s_o)=f_1$.
\end{corollary}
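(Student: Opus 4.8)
The plan is to exploit the hypothesis $\ep\te=-1$ to turn \eqref{keyODE} into a first-order equation for $p:=f'$ that possesses the constant solutions $p\equiv\pm1$ as barriers. First I would observe that when $\ep\te=-1$ one has $\ep=-\te$, so that $\te+\ep f'^2=\te(1-f'^2)$, and \eqref{keyODE} becomes
\[ f''=\te\big(1-f'^2\big)\big(1-f'h\big). \]
Writing $p=f'$, this is the first-order ODE $p'=G(s,p)$ with $G(s,p):=\te(1-p^2)(1-p\,h(s))$. Since $h:I\to\R$ is smooth (it equals $\mathrm{div}(\grad\pi)$ read off on $I$), the right-hand side $G$ is a polynomial in $p$ with smooth coefficients, hence smooth on the whole strip $I\times\R$. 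By the standard Picard--Lindel\"of theorem, the initial value problem $p(s_o)=f_1$ has a unique maximal solution.

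The key step is the barrier argument. Because $G(s,\pm1)=0$ for every $s$, the constant functions $p\equiv1$ and $p\equiv-1$ are themselves solutions of $p'=G(s,p)$. As the initial datum satisfies $f_1\in(-1,1)$, uniqueness of solutions forbids the graph of $p$ from ever meeting either of these constant solutions; consequently $p(s)\in(-1,1)$ throughout the maximal interval of existence, and in particular $p$ stays bounded.

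Global existence on all of $I$ then follows from the continuation (escape) theorem applied on the domain $I\times\R$: if the maximal interval of existence were a proper subinterval of $I$, the solution $(s,p(s))$ would have to leave every compact subset of $I\times\R$ as $s$ tends to a finite endpoint interior to $I$, forcing $|p|\to\infty$; this contradicts the confinement $|p|<1$. Hence $p$ is defined on all of $I$, and setting $f(s):=f_o+\int_{s_o}^{s}p(\tau)\,d\tau$ yields a solution of \eqref{keyODE} on the entire interval $I$ with $f(s_o)=f_o$ and $f'(s_o)=f_1$, as required.

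The hard part is not the local ODE theory but recognizing that the confinement of $f'$ to $(-1,1)$ is exactly what rules out finite-time blow-up and upgrades local to global existence; this is where the hypothesis $\ep\te=-1$ is indispensable. It is precisely this sign condition that produces the factor $1-f'^2$, and hence the invariant lines $p=\pm1$ that trap $f'$. When instead $\ep\te=+1$ one gets the factor $\te(1+f'^2)$, which never vanishes, the barriers disappear, and $f'$ may escape in finite parameter, so a solution defined on all of $I$ need no longer exist.
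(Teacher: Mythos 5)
Your proof is correct and follows essentially the same route as the paper: the substitution $w=f'$ turning \eqref{keyODE} into a first-order equation, the constant solutions $w\equiv\pm 1$ acting as barriers via uniqueness, global existence on $I$ from the confinement $|w|<1$ plus the continuation theorem, and recovery of $f$ by integration. You merely spell out (Picard--Lindel\"of, the escape argument) what the paper's terse proof leaves implicit.
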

\begin{proof} We make the change $w=f'$, so that \eqref{keyODE} reduces to 
\[w'(s) = \pm (1-w(s)^2) (1 - w(s)h(s)).
\]
Note that constant functions $w(s)=\pm 1$ are solutions to this differential equation, and they do not cross. Then, given initial conditions $s_o\in I$ and $f_1\in (-1,1)$, there exists a well-defined solution $w:I\rightarrow\R$. It remains to compute $f(s)=f_o+\int_{s_o}^s w(x)dx$ for some $f_o\in\R$.
\end{proof}

\section{Examples}

\begin{examp}  \label{cero} A generalization of the \textit{Grim Reaper Cylinder} in $\mathbb{R}^{m+1}$. \normalfont Given $(M,g_M)$ a connected semi-Riemannian manifold, let $\Omega$ be an open domain in $M$ such that $\Gamma:\Omega\rightarrow M\times\R$ is a translating soliton in $(M\times\R,g_M+\ep dt^2)$, for some $\ep=\pm 1$. Now, we consider another connected semi-Riemannian manifold $(P,g_P)$. The product 
\[ \bar{\Gamma}: P\times \Omega\rightarrow (P\times M\times\R,g_P+g_M+\ep dt^2) \quad
\bar{\Gamma}(q,x)=(q,\Gamma(x)),
\]
is just another translating soliton. Indeed, if $\vec{H}$ is the mean curvature vector of $\Gamma$, then, $\vec{J}=(0,\vec{H})$ is the mean curvature vector of $\bar{\Gamma}$, since we are dealing with the product spaces. Thus, $\vec{J}=\partial_t^{\perp}.$ 
$\Box$
\end{examp}

\begin{examp} \label{un} \normalfont We study those translating solitons in $\R^{n+1}$ with standard flat metric which are $SO(n)$-invariant, also known as \textit{rotationally invariant}. If in $\R^n$ with usual coordinates $x=(x_1,\ldots,x_n)$, we take $\pi(x)=\sqrt{\sum_i x_i^2}$, then 
\[ \grad \pi (x)= \sum_i \frac{x_i}{\pi(x)}\partial_i\vert_x, \quad \vert \grad\pi(x)\vert^2 = \sum_i\frac{x_i^2}{\pi(x)^2} = 1, \quad \mathrm{div}(\grad \pi)(x) = \frac{n-1}{\pi(x)}.
\]
This means that our manifold has to be $M=\R^n\backslash\{0\}$.  However, in order to obtain a $C^2$-class translating soliton,  \eqref{keyODE} becomes 
\begin{equation} \label{yasesabe}
f''(s)=\big(1+ f'(s)^2\big)\Big(1-\frac{n-1}{s}f'(s)\Big), \quad 
f'(0)=0, \ f(0)=a\in\R.
\end{equation}
In \cite{AW}, it was proved that there exist a convex, rotationally symmetric translating soliton over the plane. 
This was improved in \cite{CSS} obtaining entire  rotationally symmetric translating solitons $F:\mathbb{R}^{n}\times[0,+\infty)\rightarrow\mathbb{R}^{n+1}$, $n\geq 2$. In fact, in both papers \cite{AW} and \cite{CSS}, the same ODE as \eqref{yasesabe} is obtained. This means that this problem has a unique $C^{\infty}[0,+\infty)$ solution. The associated translating soliton is known as \textit{translating paraboloid}. In addition, by Corollary \ref{oide}, we recover the \textit{translating catenoid}.

In the book \cite{Besse}, we can find a list of Lie groups acting transitively and effectively on the sphere $\mathbb{S}^{n-1}$, hence, also on $\R^{n}$, and the space of leaves will be in all cases the interval $[0,+\infty)$. Thus, we can change $O(n)$ by any of them, namely $SO(n)$, $SU(n)$, $Sp(n)Sp(1)$, $Sp(n)U(1)$, $Sp(n)$, $G_2$, $Spin(7)$ and $Spin(9)$, for suitable values of the dimension $n$, depending on the case. This means that we can use all these groups to recover the rotationally symmetric translating solitons in $\R^n$. 
$\Box$
\end{examp}

In the following examples, we will need some tools, which can be found in the book \cite{Wiggins}. In $\R^2$, with coordinates $(s,t)\in\R^2$, 
an \textit{autonomous vector field} is a map $X:\R^2\rightarrow\R^2$, at least of class $C^1$. Given a point $p\in\R^2$ such that $X(p)=(0,0)$, we compute its \textit{linearlization} at $p$, namely 
\[ DX(p) = \begin{pmatrix} \frac{\partial X}{\partial s}(p) \  \frac{\partial X}{\partial x}(p) \end{pmatrix}.\]
We can compute the (complex) eigenvalues of this matrix, $\lambda_1$ and $\lambda_2$, with eigenvectors $v_1$ and $v_2$. When both eigenvalues are real and $\lambda_1\lambda_2<0$, the point $p$ is called a \textit{saddle point}. 
In such case, according to Theorem 3.2.1 of \cite{Wiggins}, there is a submanifold (in this case, just a curve) whose tangent space at $p$ is spanned by the eigenvector of negative eigenvalue, called the \textit{unstable submanifold}. The good property for us is that this submanifold can be seen as the graph of map defined on a small neighbourhood of $p$. 

\begin{examp}\label{deux} \normalfont Consider the Minkowski space $\mathbb{L}^{n+1}$ with standard flat metric $g=\sum_{i=1}^ndx_i^2-dx_{n+1}^2$. A rotationally invariant space-like soliton $F:\R^n\rightarrow\mathbb{L}^{n+1}$, $F(x)=(x,u(x))$ is determined by the group $O(n)$, in a very similar way as in Example \ref{un}, but  \eqref{keyODE}  is now
\begin{equation}\label{rotacionalL3}
f''(s)=\big(1 - f'(s)^2\big)\Big(1-\frac{n-1}{s}f'(s)\Big), \quad 
f'(0)=0, \ f(0)=a\in\R.
\end{equation}
To study the solution to this problem, first step is to define $w=f'$, obtaining
\begin{equation} \label{wsingular}
w'(s) = (1-w(s)^2)\Big(1-\frac{n-1}{s}w(s)\Big), \quad w(0)=0.
\end{equation}
We consider the following \textit{autonomous vector field}:
\[ X:\R^2\rightarrow\R^2, \quad 
X(s,x) = \Big(s, (n-1)x^3-s x^2-(n-1)x+s  \Big).
\]
Note that $X(0,0)=(0,0)$. At $(0,0)$, the \textit{linearlization} is
\[ DX(0,0) = \begin{pmatrix} \frac{\partial X}{\partial s}(0,0) \  \frac{\partial X}{\partial x}(0,0) \end{pmatrix}
= \begin{pmatrix} 1& 0 \\ 1 & 1-n  \end{pmatrix},
\]
whose eigenvalues are $\lambda_1=1$ and $\lambda_2=1-n$, with corresponding eigenvectors $v_1=(n,1)^t$, $v_2=(0,1)^t$. The point $(0,0)$ is a saddle point. By Theorem 3.2.1 of \cite{Wiggins}, there exist a $1$-dimensional,  
local unstable analytic manifold (\textit{of fixed points}), around $(0,0)$, 
whose tangent space at $(0,0)$ is spanned by $v_2$, which is a graph in an small interval around $s=0$, namely 
\[
W = \{ (s,x)\in \R\times\R : x=w(s), \ \vert s\vert\, small\},
\]
for some analytic function $w$ defined on a small interval $(-\bar{\delta},\bar{\delta})$.  
This means that our dynamical system has a solution $\alpha:(-\delta,\delta)\rightarrow W$, $\alpha(t)=(s(t),x(t))$, with $\alpha'(t)=X(\alpha(t))$,  which is an analytic diffeomorphish such that $\alpha(0)=(0,0)$, $\alpha'(0)=\lambda (n,1)$ for some $\lambda\in\R$, $\lambda\neq 0$, and $x(t)=w(s(t))$. We compose with the inverse of $s$, so that $t=t(s)$. Thus, $w(s)=x(t(s))$ is analytic near $0$, with $w(0)=0$. In addition,  since $X(\alpha(t))=\alpha'(t)$, then for $s>0$, 
\begin{gather*}
w'(s) = x'(t(s))t'(s) = \frac{x'(t(s))}{s'(t)} = \frac{(1-x(t(s))^2)(s(t)-(n-1)x(t(s))}{s(t)} \\
= (1-x(t(s))^2) \Big ( 1-\frac{(n-1)x(t(s))}{s(t)}\Big) = (1-w(s))^2) \Big ( 1-\frac{(n-1)w(s)}{s}\Big). 
\end{gather*}
From here, recalling $f'=w$, we obtain an analytic solution to \eqref{rotacionalL3}. Then, as in Corollary \ref{globalexamples}, we know that we can extend this solution to $f:[0,\infty)\rightarrow\R$, with $f'(0)=0$. 

In a similar fashion to Example \ref{un}, we can construct a spacelike rotationally symmetric translating soliton with two ends, that we can also call the \textit{translating catenoid} in $\mathbb{L}^{n+1}$. $\Box$
\end{examp}

\begin{corollary}\label{notimelikeinLn}
There do not exist time-like $SO(n)$-invariant translating solitons in Minkowski space $\mathbb{L}^{n+1}$.
\end{corollary}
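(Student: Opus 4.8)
The plan is to reduce the problem to the radial ODE already isolated in Example \ref{deux} and then show that the timelikeness constraint is incompatible with the behaviour forced along the rotation axis. First I would apply Theorem \ref{existence} with $M=\R^n$, $\Sigma=SO(n)$ and $\pi(x)=|x|:\R^n\setminus\{0\}\to(0,\infty)$. Here $\te=+1$ since the base is spacelike ($|\grad\pi|^2=1$), the fibers are the round spheres so that $h(s)=(n-1)/s$, and the extra factor is timelike, i.e. $\ep=-1$. Thus any $SO(n)$-invariant translating soliton has the form $u=f\circ\pi$ with $f$ solving \eqref{rotacionalL3}, namely $f''=(1-f'^2)(1-\tfrac{n-1}{s}f')$. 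The hypersurface is timelike precisely when its unit normal is spacelike, $\ep'=+1$; by $W^2=\ep'(\ep+|\grad u|^2)$ together with $|\grad u|=|f'|$ (as $|\grad\pi|=1$), this means $|f'|>1$ on the whole domain. Since the constants $w\equiv\pm1$ solve the first-order equation \eqref{wsingular} satisfied by $w=f'$, local uniqueness prevents $f'$ from ever crossing $\pm1$, so on each connected component of its domain the graph is timelike throughout, with $f'$ confined to $(1,\infty)$ or to $(-\infty,-1)$.

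Next I would rule out that such a profile can be completed across the axis. If the domain contained the origin, then $u(x)=f(|x|)$ being $C^2$ there would force $\grad u(0)=0$, i.e. $|f'|\to 0$, contradicting $|f'|>1$; equivalently $W=\sqrt{f'^2-1}\to 0$ and the hypersurface degenerates to a lightlike one at the axis. Hence no timelike soliton is an entire graph, nor can one close up smoothly at the axis, which already excludes the ``bowl''-type solitons analogous to the translating paraboloid of Example \ref{un}.

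It then remains to exclude configurations that avoid the axis, in particular the catenoid-type solitons obtained by gluing two graphs along a vertical tangent as in Corollary \ref{oide}. For this I would pass to the inverse profile $\alpha=f^{-1}$, writing the radius $s=\alpha(y)$ as a function of the height $y$; substituting $f'=1/\alpha'$ into \eqref{keyODE} turns the soliton equation into $\alpha''=(\alpha'^2-1)(\tfrac{n-1}{\alpha}-\alpha')$, while timelikeness becomes $|\alpha'|<1$. At any critical point of the radius ($\alpha'=0$) one gets $\alpha''=-(n-1)/\alpha<0$, so every interior critical point of $s$ is a strict \emph{maximum}: a genuine neck (a minimum of the radius) is impossible, and two timelike graphs can never be glued into a catenoid. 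Consequently the radius has at most one critical point and is strictly monotone on each side of it; on a decreasing branch $\alpha$ can neither tend to a positive constant (that would force $\alpha''\to 0$ while $\alpha''\to-(n-1)/\alpha_\infty\neq 0$) nor escape to $+\infty$, so it must decrease to $\alpha=0$, where $\alpha'\to-1$ and again $W\to 0$. In every case the maximal timelike profile runs into the axis and degenerates, so no complete (in particular no entire) timelike $SO(n)$-invariant soliton exists.

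The main obstacle is exactly this last qualitative analysis: one must verify rigorously that the radius can neither stabilize at a positive value nor diverge, so that the profile is genuinely trapped and forced onto the axis, and one must read off the correct degenerate boundary behaviour there. The decisive tools are the invariant lines $f'\equiv\pm1$ (equivalently $\alpha'\equiv\pm1$), which trap the solution in the timelike regime, and the sign of $\alpha''$ at a critical radius, which kills the neck and hence the catenoid scenario. The origin argument disposes of entire graphs at once, but the catenoid exclusion is where the real work lies.
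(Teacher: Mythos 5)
Your argument is correct, and it strictly contains the paper's proof. The paper's entire proof is your second paragraph, phrased geometrically and with no ODE at all: the fixed-point set of the $SO(n)$-action on $\L^{n+1}$ is the timelike axis $L=\{0\}\times\R$, and at a point of $F(\Omega)\cap L$ invariance plus smoothness force the tangent hyperplane there to be the unique $SO(n)$-invariant hyperplane $\R^n\times\{0\}$, which is spacelike, contradicting timelikeness. The paper says nothing more, so it implicitly assumes the soliton meets the axis and only disposes of graphs over domains containing the origin. Your third paragraph is therefore genuinely new relative to the paper: the inverse-profile equation $\alpha''=(\alpha'^2-1)\bigl(\tfrac{n-1}{\alpha}-\alpha'\bigr)$ is correct --- note it disagrees with the formula printed in Corollary \ref{oide}, where $\ep$ and $\te$ got swapped in the first factor; your sign is the right one, and the sign matters, since with the printed one a critical point of the radius would be a minimum and the neck could not be excluded --- timelikeness is indeed $\vert\alpha'\vert<1$, and $\alpha''=-(n-1)/\alpha<0$ at critical points kills necks, hence catenoid-type gluings and complete axis-avoiding profiles, cases the paper's argument simply cannot see. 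Two caveats, neither fatal: (i) a completeness or maximality hypothesis is genuinely needed for your stronger conclusion, because local timelike examples do exist --- solving \eqref{keyODE} with $h(s)=(n-1)/s$, $\te=1$, $\ep=-1$ and data $f(1)=0$, $f'(1)=2$ gives a timelike $SO(n)$-invariant soliton over an annulus, so the corollary is literally false without some such reading; the paper's proof tacitly covers only ``solitons meeting the axis'', yours covers ``complete solitons''; (ii) your claim that $\alpha'\to-1$ as the profile reaches the axis is asserted rather than proved (it is true: if $\alpha'\to\ell>-1$, then $\alpha''\approx-(1-\ell^2)(n-1)/\alpha$ fails to be integrable while $\alpha'$ stays bounded, a contradiction), but it is also unnecessary, since merely reaching the axis already triggers your origin argument. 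Modulo writing out these routine details, your proof is sound and establishes more than the paper's own argument does.
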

\begin{proof} Let $F:\Omega\subset \R^n\rightarrow \mathbb{L}^{n+1}$, $F(x)=(x,u(x))$ a timelike $SO(n)$-invariant translating soliton. The action of $SO(n)$ in $\mathbb{L}^{n+1}$ is determined by the time-like axis $L=\{(0,x_{n+1}) \in\mathbb{L}^{n+1} : x_{n+1}\in\R\}$. For $F$ be $SO(n)$-invariant and smooth, then the tangent space at the intersection of $F(\Omega)\cap L$ has to be spacelike, which is a contradiction. 
\end{proof}

\begin{examp}\label{desitter} \normalfont 
We consider the Minkowski space $\mathbb{L}^{n+1}$, $n\geq 3$, with its usual flat metric $g(X,Y)=\sum_{i=1}^n X_iY_i-X_{n+1}Y_{n+1}$. The de Sitter space-time is
\[ dS^n = \{ x=(x_1,\ldots,x_{n+1}) \in \mathbb{L}^{n+1} : g(x,x)=+1 \}.
\]
As usual, we identify the tangent space at $x\in dS^n$, 
\[
T_x dS^n = \{ X=(X_1,\ldots,X_{n+1})\in\R^{n+1} : g(X,x)=0\}. 
\]
In particular, the position vector is a unit, spacelike, normal vector field $\chi:dS^n\rightarrow\mathbb{L}^{n+1}$. 
Now, the Lie group $O(n-1)$ acts by isometries on $dS^n$ as usual:
\[ O(n-1)\times dS^n \rightarrow dS^n, \quad 
(A,x)\mapsto A\cdot x = \left(\begin{array}{c|c} A & 0 \\  \hline 0 & 1 \end{array}\right)x = \left( \begin{array}{c} A(x_1,\ldots,x_n)^t \\ x_{n+1}\end{array}\right).
\]
We consider the map 
\[ \tau : dS^n\rightarrow \R, \quad \tau(x)=x_{n+1}.
\]
We define also de constant vector
\[ \xi=(0,\ldots,0,-1).\]
Clearly, 
\[ \tau = g(\xi,\chi).\]
Given a point $x\in dS^n$ and $X\in T_xdS^n$, we have $g_x(\grad\tau,X)=(d\tau)_xX = g(\xi,X)$. Therefore, $\grad\tau$ is going to be the tangent component of $\xi$, i.~e., 
\[ \grad\tau = \xi - g(\xi,\chi)\chi = \xi -\tau \chi, \quad  g(\grad\tau,\grad\tau) = -1-\tau^2.\]
We look for  smooth functions $v$ and $\bar{\tau}$ such that $\tau=v\circ\bar\tau$ and $\|\grad \bar{\tau}\|^2= -1$. Then, $\grad\tau=v'(\bar \tau)\grad\bar\tau$. This implies $(v'(\bar\tau))^2(-1)=-1-\tau^2=-1-(v(\bar{\tau}))^2$, so that $v'(s)=\pm\sqrt{1+v(s)^2}$. We choose $v(s)=\sinh(s)$. That means 
\[ \bar{\tau}  = \sinh^{-1}(\tau) \ \Leftrightarrow \tau=\sinh(\bar{\tau}).
\]

Consider a local orthonormal frame $(e_1,\ldots,e_n)$ of $TdS^n$, with $\ep_i=g(e_i,e_i)=\pm 1$. Let $\nabla$, $\bar{\nabla}$ be respectively the Levi-Civita connection of $dS^n$ and $\mathbb{L}^{n+1}$. 
\begin{align*}
\mathrm{div}(\grad\tau) &= \sum_i \ep_i g(\nabla_{e_i}\grad\tau,e_i) =  \sum_i \ep_i g(\bar\nabla_{e_i}\grad\tau,e_i) =
 \sum_i \ep_i g(\bar\nabla_{e_i}(\xi-\tau\chi),e_i) \\
 &= \sum_i \ep_i g(\bar\nabla_{e_i}(-\tau\chi),e_i) = \sum_i (-\ep_i)\tau  g(\bar\nabla_{e_i}(\chi),e_i) = 
 -\tau \sum_{i} \ep_i^2= -n\tau. 
\end{align*}
Since $\tau=v\circ\bar{\tau}$, then 
\begin{align*}
-n\sinh(\bar{\tau})=-n\tau = \mathrm{div}(\grad\tau) = v'(\bar{\tau}) \mathrm{div}(\grad\bar{\tau}) +v''(\bar\tau)\|\grad\bar{\tau}\|^2 = \cosh(\bar{\tau})\mathrm{div}(\grad\bar{\tau})-\sinh(\bar{\tau}).
\end{align*} 
Thus, 
\[\mathrm{div}(\bar{\tau}) = -(n-1)\tanh(\bar{\tau}).\]
By taking $h(s)=-(n-1)\tanh(s)$, the initial value problem becomes
\begin{equation}\label{pvi-desitter}
f''(s)=\big(-1+\ep f'(s)^2\big)\left(1+(n-1)\tanh(s)f'(s) \right), \  f'(s_o)=f_0, \ f(s_o)=f_1,
\end{equation}
for any $s_o\in\R$ and any initial values $f_o,f_1\in\R$.  

We point out that by Corollary \ref{globalexamples}, when $\ep=+1$, there exist solutions defined on the whole $\R$ whenever $f'(s_o)\in (-1,1)$. They give rise to entire rotationally symmetric translating solitons in $dS^{n}\times \R$. Since $\te=-1$,  this type of translating solitons will have $\ep' = \mathrm{sign}(\ep +\te (f')^2) =+1$, that it to say, all of them will be spacelike. In addition, by Corollary \ref{oide}, it is possible to construct spacelike rotationally symmetric translating solitons with two ends whose topology is $\mathbb{S}^{n-1}\times \R$.  
\end{examp}

\begin{examp}\label{hyperbolicspace} \normalfont 
We consider the Minkowski space $\mathbb{L}^{n+1}$, $n\geq 2$, with its usual flat metric $g(X,Y)=\sum_{i=1}^n X_iY_i-X_{n+1}Y_{n+1}$. The Weierstra\ss' model of the Hyperbolic Space is
\[ \W = \{ x=(x_1,\ldots,x_{n+1}) \in \mathbb{L}^{n+1} : g(x,x)=-1, \ x_{n+1}\geq 1\}.
\]
As usual, we identify the tangent space at $x\in \W$, 
\[
T_x \W = \{ X=(X_1,\ldots,X_{n+1})\in\R^{n+1} : g(X,x)=0\}. 
\]
In particular, the position vector is a unit, timelike, normal vector field $\chi:\W\rightarrow\mathbb{L}^{n+1}$. 
The Lie group $O(n-1)$ acts by isometries on $\W$ as usual:
\[ O(n-1)\times \W \rightarrow \W, \quad 
(A,x)\mapsto A\cdot x = \left(\begin{array}{c|c} A & 0 \\  \hline 0 & 1 \end{array}\right)x = \left( \begin{array}{c} A(x_1,\ldots,x_n)^t \\ x_{n+1}\end{array}\right).
\]
We consider the map 
\[ \tau : \W\rightarrow \R, \quad \tau(x)=x_{n+1}.
\]
We define also the constant vector
\[ \xi=(0,\ldots,0,-1).\]
Clearly, 
\[ \tau = g(\xi,\chi).\]
Given a point $x\in \W$ and $X\in T_x\W$, we have $g_x(\grad\tau,X)=(d\tau)_x X = g(\xi,X)$. Therefore, $\grad\tau$ is going to be the tangent component of $\xi$, i.~e., 
\[ \grad\tau = \xi + g(\xi,\chi)\chi = \xi+\tau \chi, \quad  g(\grad\tau,\grad\tau) =\tau^2 -1\geq 0.\]
The only critical point is $p=(0,\ldots,0,1)$, so we remove it and restrict all computations to $\Omega=\W\backslash\{p\}$. In addition, $\te = \mathrm{sign}(\grad\tau)=+1$.

We look for  smooth functions $v$ and $\bar{\tau}$ such that $\tau=v\circ\bar\tau$ and $\|\grad \bar{\tau}\|^2= 1$. Then, $\grad\tau = v'(\bar{\tau})\grad\bar{\tau}$. This implies $\tau^2-1=v(\bar{\tau})^2-1=(v'(\bar\tau))^2$, so that $v'(s)=\pm\sqrt{s^2-1}$. We choose $v(s)=\cosh(s)$. That means 
\[ \bar{\tau}  = \cosh^{-1}(\tau) \ \Leftrightarrow \tau=\cosh(\bar{\tau}).\]
Consider a local orthonormal frame $(e_1,\ldots,e_n)$ of $T\Omega$. Let $\nabla$, $\bar{\nabla}$ be respectively the Levi-Civita connection of $\Omega$ and $\mathbb{L}^{n+1}$. 
\begin{align*}
\mathrm{div}(\grad\tau) &= \sum_i  g(\nabla_{e_i}\grad\tau,e_i) 
=  \sum_i  g(\bar\nabla_{e_i}\grad\tau,e_i) = \sum_i g(\bar\nabla_{e_i}(\xi+\tau\chi),e_i) \\
 &= \sum_i  g(\bar\nabla_{e_i}(\tau\chi),e_i) = \sum_i \tau  g(\bar\nabla_{e_i}(\chi),e_i) = 
 \tau \sum_{i} 1= n\tau. 
\end{align*}
Since $\tau=v\circ\bar{\tau}$, then 
\begin{align*}
n\cosh(\bar{\tau})=n\tau = \mathrm{div}(\grad\tau) = v'(\bar{\tau}) \mathrm{div}(\grad\bar{\tau}) +v''(\bar\tau)\|\grad\bar{\tau}\|^2 
= \sinh(\bar{\tau})\mathrm{div}(\grad\bar{\tau})+\cosh(\bar{\tau}).
\end{align*} 
Thus, 
\[\mathrm{div}(\bar{\tau}) = (n-1)\coth(\bar{\tau}).\]
By taking $h(s)=(n-1)\coth(s)$, the initial value problem becomes
\begin{equation}\label{pvi-hyperbolicspace}
f''(s)=\big(1+\ep f'(s)^2\big)\left(1-(n-1)\coth(s)f'(s) \right), \  f'(0)=0, \ f(0)=f_1,
\end{equation}
for any  initial value $f_1\in\R$.
By taking $w=f'$, we consider the auxiliary problem 
\[ w'(s) = \big(1+\ep w(s)^2\big)\big(1-(n-1)\coth(s)w(s) \big), \  w(0)=0.
\]
We want to show that this problem has solutions. Thus, we proceed as in Example \ref{deux}. We consider the dynamical system
\[ X:\R^2\rightarrow\R^2, \quad X(s,x)=\Big(\sinh(s), (1+\ep x^2)\big(\sinh(s)-(n-1)\cosh(s)x\big)\Big).
\]
Note that $X(0,0)=(0,0)$. The \textit{linearization} at $(0,0)$ is
\[ DX(0,0) = \begin{pmatrix} \frac{\partial X}{\partial s}(0,0) \  \frac{\partial X}{\partial x}(0,0) \end{pmatrix}
= \begin{pmatrix} 1& 0 \\ 1 & 1-n  \end{pmatrix}.
\]
By repeating the steps in Example \ref{deux}, we obtain an analytic solution $f:[0,T)\rightarrow\R$ to \eqref{pvi-hyperbolicspace} for each $f_1\in\R$, $f(0)=f_1$.

When $\ep=-1$, by Corollary \ref{globalexamples}, $T=+\infty$. This means that we obtain entire rotationally symmetric graphical translating solitons  in $\W\times\R$ with a Lorentzian metric. In addtion, it is possible to construct a \textit{translation helicoid} in this setting.

Assume now that $\ep=+1$. Firstly, we see that $f'(s)\geq 0$ for any $s\in[0,T)$. Indeed, if there exists $s_o\in (0,T)$ such that $f'(s_o)<0$, by \eqref{pvi-hyperbolicspace}, we obtain $f''(s_o)>0$, which shows that $f'$ is increasing around $s_o$. Since $f'(0)=0 > f'(s_o)$, by continuity, there exists $s_1\in (0,s_o)$ such that $f''(s_1)=0$ and $f'(s_1)\leq f'(s_0)$. But then, by \eqref{pvi-hyperbolicspace}, we obtain $0=\big(1+f'(s_1)^2\big)\big( 1-(n-1)\coth(s_1)f'(s_1)\big)>0$, which is a contradiction. Next, we are going to see that we can extend $f$ from $0$ to $+\infty$.  For the sake of simpleness, we call $w=f':[0,T)\rightarrow\R$, and $g, G:[0,T]\times [0,\infty)\rightarrow\R$ given by $g(s,x)=(1+x^2)(1-(n-1)\coth(s)x)$ and $G(s,x)=(1+x^2)(1-(n-1)x/s)$. A simple computation shows 
\[G(s,x)-g(s,x)= (n-1)(1+x^2)\left(\coth(s)-\frac1s\right) x \geq 0, \quad \forall (s,x)\in [0,\infty)\times [0,\infty). 
\]
Thus, the solution to the problem $w'(s)=G(s,w(s))$, $w(0)=0$ is an upper bound of the solution to the problem $w'(s)=g(s,w(s))$, $w(0)=0$. But we already know that the solution to the problem 
\[ w(0)=0, \quad w'(s)= (1+w(s)^2)\left(1 - \frac{n-1}{s}w(s)\right)
\]
is defined on $[0,\infty)$. This readily shows that $f$, solution to \eqref{pvi-hyperbolicspace}, is defined on $[0,\infty)$.  $\Box$

\end{examp}

\begin{examp}\normalfont \label{principalfiberbundle} We recall 
\[ H_1^3 = \{ (z_1,z_2)\in \mathbb{C}^2 :  -\vert z_1\vert^2+\vert z_2\vert^2=-1 \}
\]
is a 3-dimensional quadric in the standard indefinite complex space $\mathbb{C}^2_1$. Also, it is well-known that the unit group of complex numbers $\mathbb{S}^1=\{ a \in \mathbb{C} : a\bar{a}=1 \}$ acts by isometries on $H_1^3$, $(a,(z_1,z_2))\to (az_1,az_2)$, obtaining a principal fiber bundle over the complex hyperbolic line $H_1^3\rightarrow \mathbb{C}H^1$ in the usual way, with timelike, totally geodesic fibers $\mathbb{S}^1$. In addition, $\mathbb{C}H^1$ is isometric to the real hyperbolic plane $\R H^2$. In fact, by denoting $\bar{z}$ the complex conjugate of $z\in\mathbb{C}$, 
we can construct the map
\[ \pi: H_1^3 \rightarrow  \mathbb{C}\times \R\equiv \R^3, \quad \pi(z_1,z_2) = \big(2z_1\bar{z}_2, \vert z_1\vert^2+\vert z_2\vert^2\big).
\]
The image of this map is the Weierstrass model of $\R H^2$, embedded in the Minkowski 3-space as a quadric, and therefore it is easy to regard $\pi$ as the projection from $H_1^3$ to $\mathbb{C}H^1\equiv\R H^2$. Next, in Example \ref{hyperbolicspace} we obtained rotationally symmetric translating solitons in $\R H^2$, with group $\Sigma=\mathbb{S}^1$. Now, by Corollary \ref{cor_harm_sub}, we obtain a translating soliton $\Gamma$ in $H_1^3$, by lifting the translating soliton from $\R H^2$. Note that $\Gamma$ is invariant  by the Lie group $\mathbb{S}^1\times\mathbb{S}^1$. $\Box$
\end{examp}

\begin{examp}\normalfont  \label{Magda}  
Take the Minkowski plane $\L^2$ with standard flat metric $g=dx^2-dy^2$. The \textit{boost} Lie group is the set of matrices
\[ \Sigma = \left\{ A_{\theta} = \begin{pmatrix} \cosh(\theta) & \sinh(\theta) \\
\sinh(\theta) & \cosh(\theta) \end{pmatrix} : \theta\in\R 
\right\},
\]
which acts on $\L^2$ by isometries. However, in order to obtain suitable quotients, we need to split the plane in four regions whose boundaries are made of two light-like geodesics, namely
\begin{align*}
&\Omega_1 =  \{ (x,y)\in\L^2 : y^2<x^2, \ 0<x \}, \quad 
&\Omega_2 =  \{ (x,y)\in\L^2 : y^2>x^2, \ 0<y \} \\
&\Omega_3 =  \{ (x,y)\in\L^2 : y^2<x^2, \ 0>x \}, \quad 
&\Omega_4 =  \{ (x,y)\in\L^2 : y^2>x^2, \ 0>y \}.
\end{align*}
We will use the globally defined orthonormal frame $\{\partial_x,\partial_y\}$. 
For $F$ be $\Sigma$-invariant, we have to extend the action of $\Sigma$ to $(\R^3,\produ{,}=g+dt^2)=\L^3$ in the natural way, namely
\[ \Sigma\times \R^3\rightarrow\R^3, \quad (A,(x,y,t))\mapsto \big((x,y)A,t\big).\]
Then, at the intersection point with the axis $\{(0,0)\}\times\R$, the tangent plane has to be orthogonal to the axis. In other words, $(dF)_{(0,0)} T_{(0,0)}\R^2 \perp \partial_t$. This means that $f'(0)=0$. \smallskip

We begin with $\Omega_1$. We define the projection, with its usual properties:
\begin{gather*}\pi:\Omega_1\rightarrow (0,+\infty),\ 
\pi(x,y)=+\sqrt{x^2-y^2},\\
\grad\pi(x,y)=\frac{x}{\pi(x,y)}\partial_x+\frac{y}{\pi(x,y)}\partial_y, \
\vert \grad\pi \vert^2 =+1=\te, \  
\mathrm{div}(\grad \pi) = 
\frac{1}{\pi}.
\end{gather*}
It is very simple to check that $\pi\big((x,y)A_{\theta}\big) = \pi(x,y)$ for any $(x,y)\in\L^2$ and $\theta\in\R$. This means that $\pi$ can work as the expected projection map. 

The product manifold $(\Omega_1\times\R,\produ{,}=g+dt^2)$ is an open subset of flat 3-Minkowski space. Thus, we are constructing a time-like translating soliton as a graph over a timelike 2-plane. We transform our differential equation into the following problem.

We recall that we need $W^2=\ep'(\ep+\te f'(\pi)^2)\geq 0$. By now, we know $\ep=\te=+1$. Then, \eqref{keyODE} becomes
\begin{equation} \label{caseOmega1}
f''(s)=\big(1+f'(s)^2\big)\left(1-\frac{f'(s)}{s}\right), \ f(0)=a, \ f'(0)=0.
\end{equation}
We already know that this problem has a $C^{\infty}[0,+\infty)$ solution by Example \ref{un}. 

We can construct our translating soliton $F^1:\Omega_1\rightarrow \L^3$, $F^1(x,y)=\Big(x,y,f\big(\sqrt{x^2-y^2}\big)\Big)$. 
Note that we have another similar one 
\[F^3:\Omega_3\rightarrow\L^3, \quad F^3(x,y)=\Big(x,y,f\big(\sqrt{x^2-y^2}\big)\Big).\]
In addition, by Corollary \ref{oide}, there exist \textit{boost invariant, spacelike, translating solitons with two ends}. \smallskip 

Next, we work on $\Omega_2$. Now, the projection $\pi$ satisfies
\begin{gather*}\pi:\Omega_2\rightarrow (0,+\infty),\ \pi(x,y)=+\sqrt{y^2-x^2},\\
\grad\pi(x,y)=\frac{-x}{\pi(x,y)}\partial_x-\frac{y}{\pi(x,y)}\partial_y, \
\vert \grad\pi \vert^2= -1=\te, \ 
\mathrm{div}(\grad \pi) = \frac{-1}{\pi}.
\end{gather*}
The differential equation  \eqref{keyODE} becomes now:
\begin{equation}\label{caseOmega2}
f''(s) = \big(-1+f'(s)^2\big)\left( 1 +\frac{f'(s)}{s}\right), \ f(0)=a, \ f'(0)=0.
\end{equation}
By the easy change $q(s)=-f(s)$, we transform this problem in
\[ q''(s) = \big(1-q'(s)^2\big)\left( 1 -\frac{q'(s)}{s}\right), \ q(0)=-a, \ q'(0)=0.
\]
By Example \ref{deux}, we know that this problem has an analytic solution in $[0,+\infty)$. 

For $a=0$, we call $f_1$ the solution to \eqref{caseOmega1} in $[0,+\infty)]$, and $f_2$ the solution to \eqref{caseOmega2} in $[0,+\infty)]$, so that we can define a continous function 
\[ u:\R^2\rightarrow \R, \quad u(x,y) = \left\{
\begin{array}{cl} f_1\left(\sqrt{x^2-y^2}\right),  & (x,y)\in \Omega_1\cup\Omega_3, \\
0,  & (x,y)\in\partial\Omega_i, i=1,2,3,4,\\
f_2\left(\sqrt{y^2-x^2}\right),  & (x,y)\in\Omega_2\cup\Omega_4.
\end{array}
\right.
\]
Note that we get immediately $u\in C^0(\R^2)\cap C^{\infty}(\cup_{i=1}^4\Omega_i)$.  We want to prove that $u$ is in $C^{\infty}$. Let $g(s):=f_1(is)$ in a small neighborhood of $0$,  where $i=\sqrt{-1}$. Then we have
$$[g(s)]'' = - f_1''(is) = - (1 + f_1'(is)^2)(1-\frac{f'_1(is)}{is})  = - (1 - g'(s)^2 )(1 + \frac{g'(s)}{s})$$
Hence $g$ is a solution to equation \eqref{caseOmega2}, and consequently $g=f_2$. But then we get comparing the derivatives of $f_1$ and $f_2$, that 
$f_1^{(i)}(0)=f_2^{(i)}(0)=0$, if $i$ is odd, since $f_1$ and $f_2$ are real, and $f_1^{(4i+2)}(0)=-f_2^{(4i+2)}(0)$ and $f_1^{(4i)}(0)=f_2^{(4i)}(0)$ for all $i\geq 0$. 

Notice that in the same way if $\widetilde{f_1}(-s):=f_1(s)$ and $\widetilde{f_2}(-s):=f_2(s)$ in a small neighborhood of $0$, similar computations show that $\widetilde{f_1}$ and  $\widetilde{f_2}$ are also a solution to \eqref{caseOmega1} and \eqref{caseOmega2} respectively. Therefore, $\widetilde{f_1}=f_1$ and $\widetilde{f_2}=f_2$ and $f_1$ and $f_2$ are even.  

We now just need to prove the following
\begin{lemma}
Let $f_1$, $f_2$ be functions in $C^{2n}(\R)$, such that $f_1^{(k)}(0)=f_2^{(k)}(0)=0$, if $k$ is odd, $f_1^{(k)}(0)=(-1)^{\frac{k}{2}}f_2^{(k)}(0)$, if $k$ is even. Then the function $u$ defined as above is in $C^{n}(\R^2)$.
\end{lemma}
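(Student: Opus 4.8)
The plan is to collapse the two-dimensional problem to a one-dimensional one. Since $u$ restricted to each sector depends on $(x,y)$ only through $x^2-y^2$, I would first observe that $u$ factors as $u(x,y)=\phi(x^2-y^2)$, where $\phi:\R\to\R$ is the single-variable function defined by $\phi(r)=f_1(\sqrt{r})$ for $r\ge 0$ and $\phi(r)=f_2(\sqrt{-r})$ for $r\le 0$; the two prescriptions agree at $r=0$ because the even-order hypothesis with $k=0$ gives $f_1(0)=f_2(0)$. Indeed, on $\Omega_1\cup\Omega_3$ one has $x^2-y^2>0$ and $u=f_1(\sqrt{x^2-y^2})$, on $\Omega_2\cup\Omega_4$ one has $x^2-y^2<0$ and $u=f_2(\sqrt{-(x^2-y^2)})$, and on the light cone both expressions reduce to $\phi(0)$. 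Because the map $q(x,y)=x^2-y^2$ is a polynomial, hence $C^\infty$, the composition $u=\phi\circ q$ will be of class $C^n$ as soon as $\phi\in C^n(\R)$. I emphasise that this reduction is insensitive to the fact that $q$ has a critical point at the origin, so the origin requires no separate treatment; the only real work is the regularity of $\phi$.

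It then remains to prove $\phi\in C^n(\R)$, which is the heart of the matter. Away from $r=0$ the function $\phi$ is $C^{2n}$, being a composition of $f_i\in C^{2n}$ with the smooth map $r\mapsto\sqrt{\pm r}$, so the only issue is the point $r=0$, where the square root fails to be smooth. I would attack this through the Taylor expansion of $f_i$ at $0$: writing $f_1(s)=\sum_{j=0}^{n}\tfrac{f_1^{(2j)}(0)}{(2j)!}s^{2j}+R_1(s)$, the odd-order terms drop out by hypothesis, and the remainder satisfies $R_1\in C^{2n}$ with $R_1^{(k)}(0)=0$ for $0\le k\le 2n$. Substituting $s=\sqrt{r}$ turns the polynomial part into an honest polynomial in $r$, so the task reduces to showing that $r\mapsto R_1(\sqrt{r})$ is $C^n$ at $0$ with vanishing derivatives up to order $n$.

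For this last point I would use the Fa\`a di Bruno expansion $\frac{d^j}{dr^j}R_1(\sqrt{r})=\sum_{m=1}^{j}a_{j,m}\,R_1^{(m)}(\sqrt{r})\,r^{m/2-j}$ valid for $r>0$, together with the estimate $R_1^{(m)}(s)=o(s^{2n-m})$, which follows from the vanishing at $0$ of the derivatives of $R_1$ up to order $2n$. Each summand is then $o\!\left(r^{(2n-m)/2+m/2-j}\right)=o(r^{n-j})$, which tends to $0$ as $r\to0^+$ for every $j\le n$; the standard one-sided extension theorem then gives that $\phi$ is $C^n$ from the right at $0$, with $\phi^{(j)}(0^+)=\tfrac{j!}{(2j)!}f_1^{(2j)}(0)$. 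The identical computation from the left yields $\phi^{(j)}(0^-)=(-1)^{j}\tfrac{j!}{(2j)!}f_2^{(2j)}(0)$, the factor $(-1)^{j}$ arising from differentiating through the argument $-r$. Matching the one-sided derivatives is therefore \emph{equivalent} to $f_1^{(2j)}(0)=(-1)^{j}f_2^{(2j)}(0)$ for $0\le j\le n$, which is precisely the even-order hypothesis with $k=2j$. Hence $\phi\in C^n(\R)$ and the lemma follows.

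The step I expect to be the main obstacle is exactly the regularity of $\phi$ at $r=0$, i.e.\ the transition across the light cone, where the square root is singular: the one-sided derivatives of $f_i(\sqrt{\pm r})$ involve increasingly singular factors $r^{m/2-j}$, and the whole point is that the vanishing of the low-order derivatives of the Taylor remainder cancels these singularities up to order $n$, while the prescribed matching of the even derivatives guarantees that the two sides fit together. Everything else — the factorization through $x^2-y^2$ and the composition with the smooth map $q$ — is routine. As an alternative to the Fa\`a di Bruno estimate one could invoke the classical representation of an even $C^{2n}$ function as $f_i(s)=F_i(s^2)$ with $F_i\in C^n$ (using the evenness established just before the statement), reducing the matching to $F_1^{(j)}(0)=(-1)^{j}F_2^{(j)}(0)$; I would keep the Taylor argument as the primary route since it uses only the stated derivative conditions.
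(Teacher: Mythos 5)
Your proof is correct, but it takes a genuinely different route from the paper. The paper argues by induction on $n$ directly in two variables: the base case is $f_1(0)=f_2(0)$, and for the inductive step one computes $\partial_x u$ and $\partial_y u$, observes that they are $\pm x$ (resp.\ $\pm y$) times functions of the same gluing type built from $g_i(z)=f_i'(z)/z$, checks that $g_1,g_2\in C^{2n-2}$ satisfy the same parity and sign hypotheses (with $g_1^{(4i+2)}(0)=-g_2^{(4i+2)}(0)$, $g_1^{(4i)}(0)=g_2^{(4i)}(0)$), and invokes the induction hypothesis to conclude that the partials are $C^{n-1}$, hence $u\in C^n$. You instead collapse the problem to one dimension: $u=\phi\circ q$ with $q(x,y)=x^2-y^2$ smooth, and the whole content becomes the $C^n$ regularity of $\phi$ at $r=0$, which you establish via the Taylor expansion of $f_i$ (odd terms killed by hypothesis), the remainder estimate $R^{(m)}(s)=o(s^{2n-m})$, and the Fa\`a di Bruno form of $\frac{d^j}{dr^j}R(\sqrt{r})$, followed by matching of one-sided derivatives and the standard gluing lemma. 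Your route buys conceptual clarity — it exhibits $u$ as a $C^n$ function of the boost-invariant quantity $x^2-y^2$, makes the invariance manifest, gives explicit values $\phi^{(j)}(0)=\frac{j!}{(2j)!}f_1^{(2j)}(0)$, and shows the hypothesis $f_1^{(2j)}(0)=(-1)^jf_2^{(2j)}(0)$ is not merely sufficient but exactly what the matching requires; the paper's induction buys elementarity, needing nothing beyond first derivatives and the inductive bookkeeping of the sign conditions. One shared reading in both arguments: the value of $u$ on the light cone must be interpreted as the common value $f_1(0)=f_2(0)$ (the displayed definition of $u$ says $0$, which agrees since the lemma is applied with $a=0$); your $k=0$ remark handles this the same way the paper's base case does.
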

\begin{proof}
We prove the statement by induction over $n$. The case $n=0$ is trivially satisfied, since $f_1(0)=f_2(0)$. Moreover let $g_1(z)=\frac{f'_1(z)}{z}$, $g_2(z)=\frac{f'_2(z)}{z}$. We have
\begin{eqnarray*}
\partial_xf_1(\sqrt{x^2-y^2})&=&xg_1(\sqrt{x^2-y^2}),\quad \partial_xf_2(\sqrt{y^2-x^2})=-xg_2(\sqrt{y^2-x^2})\\
\partial_yf_1(\sqrt{x^2-y^2})&=&-yg_1(\sqrt{x^2-y^2}),\quad \partial_yf_2(\sqrt{y^2-x^2})=yg_2(\sqrt{y^2-x^2})
\end{eqnarray*} 
Obviously $g_1$ and $g_2$ are in $C^{2n-2}$ since $f_1'(0)=f'_2(0)=0$, and $g_1^{(4i+2)}(0)=-g_2^{(4i+2)}(0)$ and $g_1^{(4i)}(0)=g_2^{(4i)}(0)$ for all $i\geq 0$. 
Then by the induction hypothesis the function \[ v:\R^2\rightarrow \R, \quad v(x,y) = \left\{
\begin{array}{cl} g_1\left(\sqrt{x^2-y^2}\right),  & (x,y)\in \Omega_1\cup\Omega_3, \\
0,  & (x,y)\in\partial\Omega_i, i=1,2,3,4,\\
g_2\left(\sqrt{y^2-x^2}\right),  & (x,y)\in\Omega_2\cup\Omega_4.
\end{array}
\right.
\]
is in $C^{n-1}(\R^2)$. Hence $\partial_xu$ and $\partial_yu$ extend to $C^{n-1}(\R^2)$, and finally $u$ extends to $C^{n}(\R^2)$.
\end{proof}
This proves that  $u$ is in $C^{\infty}$. As a final remark, we point out that the curve joining each two adjacent pieces is a lightlike straight line. Thus,  it is possible to consider two or three contiguous pieces, and their gluing straight lines. We cannot consider the other straight half-lines, since the boundary would be lightlike. In other words, we can choose to glue either two, three or four adjacent pieces to obtain solitons.
\end{examp}


\section*{Acknowledgements}

M.~Ortega has been partially financed by the Spanish Ministry of Economy and Competitiveness and European Regional Development Fund (ERDF), project MTM2016-78807-C2-1-P.  This project started during a visit of the second author to Imperial College in February 2016: M.~A.~Lawn and M.~Ortega would like to thank the London Mathematical Society and Imperial College London, since this visit was partially supported by a Scheme 4 Grant of the LMS.

\end{document}